\documentclass[11pt,a4paper]{article}

\usepackage[utf8]{inputenc}
\usepackage[T1]{fontenc}
\usepackage{amsmath,amssymb,amsthm}
\usepackage{mathtools}
\usepackage{geometry}
\usepackage{enumitem} 

\usepackage{hyperref}
\usepackage{algorithm}
\usepackage{algpseudocode}
\usepackage{tcolorbox}
\usepackage{natbib}

\geometry{margin=1in}

\usepackage{tikz}
\usetikzlibrary{arrows.meta,positioning}

\newtheorem{theorem}{Theorem}[section]
\newtheorem{lemma}[theorem]{Lemma}
\newtheorem{proposition}[theorem]{Proposition}
\newtheorem{corollary}[theorem]{Corollary}
\theoremstyle{definition}
\newtheorem{definition}[theorem]{Definition}
\newtheorem{remark}[theorem]{Remark}

\newtheorem{assumption}[theorem]{Assumption}

\newcommand{\tX}{\widetilde{X}}
\newcommand{\tU}{\widetilde{U}}

\newcommand{\Var}{\mathrm{Var}}
\newcommand{\Cov}{\mathrm{Cov}}
\newcommand{\E}{\mathbb{E}}
\newcommand{\Prob}{\mathbb{P}}
\newcommand{\R}{\mathbb{R}}

\newcommand{\F}{\mathcal{F}}
\newcommand{\G}{\mathcal{G}}
\newcommand{\B}{\mathcal{B}}
\newcommand{\Law}{\mathcal{L}}

\begin{document}

\title{\textbf{ Decoupling for Markov Chains}}

\author{
Nawaf Bou-Rabee$^{1,2}$ \and
Victor H.\ de la Pe\~na$^{3}$
}

\date{
$^{1}$Department of Mathematical Sciences, Rutgers University--Camden\\
$^{2}$Center for Computational Mathematics, Flatiron Institute, Simons Foundation\\
$^{3}$Department of Statistics, Columbia University
}

\maketitle

\begin{abstract}
Consider a Markov chain $(X_i)_{i\ge0}$ with invariant measure $\mu$
that admits the representation
$X_{i+1}=\Phi(X_i,U_i)$, where $(U_i)_{i\ge0}$ are i.i.d. random variables and $\Phi$ is a measurable  map.
We introduce a \emph{tangent--decoupled} process
$(\widetilde X_i)_{i\ge0}$ obtained by replacing $(U_i)$ with an
independent copy.  Conditional on the realized backbone $(X_i)$, the sequence
$(f(\widetilde X_i))$ is  independent. Although
$(\widetilde X_i)$ is not Markovian, under the same ergodicity assumptions that
ensure a law of large numbers for $(X_i)$, the empirical averages
$n^{-1}\sum_{i=1}^n f(\widetilde X_i)$ converge almost surely to $\mu(f)$.
In addition, for every
$f\in L^2(\mu)$ and every $N\ge1$,
\[
\Var\!\Bigl(\sum_{i=1}^N f(X_i)\Bigr)
\;\le\;
2\,\Var\!\Bigl(\sum_{i=1}^N f(\widetilde X_i)\Bigr),
\]
and therefore
$\sigma_f^2 \le 2\,\widetilde\sigma_f^{\,2}$ for the corresponding
time-average variance constants.
The inequality requires neither reversibility nor mixing assumptions. Its proof identifies the two sequences as tangent in the sense of decoupling theory and applies the sharp $L^2$ tangent decoupling inequality of de la Peña, Yao, and Alemayehu (2025).  
\end{abstract}
\section{Introduction}

Markov chain Monte Carlo (MCMC) methods are fundamental tools for approximating expectations under a target distribution $\mu$ when direct sampling is impractical; see, e.g.,  \citep{Liu,RobertCassella,Madras}. Mathematically, an MCMC method generates a Markov chain $(X_i)_{i \geq 0}$ on a measurable state space $(S, \mathcal{S})$ with transition kernel $P$ constructed such that $\mu$ is invariant, i.e., $\mu P = \mu$. For a measurable function $f: S \to \R$, the natural estimator of $\mu(f)$ is the ergodic average
\[
\bar{f}_N = \frac{1}{N}\sum_{i=1}^N f(X_i).
\]

Under standard regularity assumptions \citep{MeynTweedie1993,DoucMoulinesPriouretSoulier}, the estimator $\bar{f}_N$ converges almost surely to $\mu(f)$ and its fluctuations are asymptotically Gaussian:
\[
\sqrt{N}(\bar{f}_N - \mu(f)) \xrightarrow{d} \mathcal{N}(0, \sigma_f^2),
\]
where the time-average variance constant 
\[
\sigma_f^2 = \Var_\mu(f) + 2\sum_{k=1}^{\infty} \Cov_\mu(f(X_0), f(X_k))
\] governs the Monte Carlo error \cite[Chapter IV]{asmussen2007}.  

Accurately estimating or bounding $\sigma_f^2$ is a central problem in MCMC
error assessment.  Our approach introduces a \emph{tangent–decoupled} sequence that preserves the
stationary marginals of the original chain while greatly simplifying its
temporal dependence; see \cite{delapena-gine} for background on decoupling
methods.
This construction leads to two key results: the decoupled sequence is itself a
consistent estimator of $\mu(f)$, and comparison with the original chain
yields a sharp, uniform finite–sample variance inequality.
Together, these results provide a new and practical route to bounding
$\sigma_f^2$ for a broad class of modern MCMC algorithms.

\subsection{Representation via Auxiliary Randomness} \label{sec:aux-randomness}

Let $(\mathbb{A}, \B(\mathbb{A}))$ be a measurable space and let $(U_i)_{i \geq 0}$ be an i.i.d.\ sequence of $\mathbb{A}$-valued random variables with common distribution $\nu$. Suppose there exists a measurable update map
\[
\Phi: S \times \mathbb{A} \to S
\]
such that the Markov chain is generated by
\begin{equation}\label{eq:chain}
X_{i+1} = \Phi(X_i, U_i), \quad i \geq 0.
\end{equation}
The transition kernel is $P(x, A) = \Prob(\Phi(x, U_0) \in A)$.

\paragraph{Examples.}
Many common Markov chains used in statistics and machine learning admit a representation of the form 
\eqref{eq:chain}.  We list several illustrative examples.

\begin{enumerate}

\item \textbf{Gibbs Sampler (Random-Scan) \cite{CasellaGeorge1992,
Owen2013}.}  
Let $S=\mathbb{R}^d$ and $\mathbb{A}=\{1,\dots,d\}\times\mathbb{R}$ with $U=(j,W)$, 
where $j$ chooses a coordinate and $W$ is a draw from the conditional distribution of $X_j$ given the other coordinates.  
The update map is
\[
\Phi(x,(j,W)) = (x_1,\dots,x_{j-1}, W, x_{j+1},\dots,x_d).
\]

\item \textbf{Random-Walk Metropolis \cite{RobertsTweedie1996RWM,RobertsGelmanGilks1997}.}  
Let $S=\mathbb{R}^d$ and let $\mathbb{A} = \mathbb{R}^d \times [0,1]$ with $U=(Z,V)$, 
where $Z\sim q$ is a proposal increment and $V\sim \mathrm{Unif}(0,1)$.  
The map
\[
\Phi(x,(Z,V)) =
\begin{cases}
x+Z, & V \le \alpha(x, x+Z),\\
x,   & \text{otherwise},
\end{cases}
\]
encodes the entire Metropolis accept--reject step.

\item \textbf{Metropolis-Adjusted Langevin Algorithm (MALA) \cite{RobertsTweedie1996}.}  
Let $S=\mathbb{R}^d$ and $\mathbb{A}=\mathbb{R}^d \times [0,1]$ with $U=(Z,V)$, $Z\sim N(0,I)$, $V\sim\mathrm{Unif}(0,1)$.  
Given step size $h>0$, the proposal is
\[
Y = x - h \nabla U(x) + \sqrt{2h}\,Z,
\]
and the update map is
\[
\Phi(x,(Z,V)) =
\begin{cases}
Y, & V \le \alpha(x,Y),\\
x, & \text{otherwise}.
\end{cases}
\]

\item \textbf{Hamiltonian Monte Carlo (HMC) \cite{DuaneKennedyPendletonRoweth1987,Neal2011HMC}.}  
Let $S=\mathbb{R}^d$ and let $\mathbb{A}$ contain all auxiliary variables needed to simulate one HMC proposal: 
a momentum draw $p \sim N(0,M)$ and a sequence of Gaussian or uniform variables used in numerical integration 
and the acceptance decision.  
Let $U=(p,W)$, where $W$ is the uniform variable for Metropolis correction.  
Define
\[
\tilde{x} = \text{LeapfrogIntegrator}(x, p),
\]
and the update map is
\[
\Phi(x,(p,W)) = 
\begin{cases}
\tilde{x}, & W \le \alpha\big((x,p),(\tilde{x},\tilde{p})\big),\\
x, & \text{otherwise},
\end{cases}
\]

\item \textbf{No-U-Turn Sampler (NUTS) \cite{HoffmanGelman2014,CarpenterEtAl2017,BouRabeeCarpenterKleppeLiu2025}.}  
Let $S=\mathbb{R}^d$ and let $\mathbb{A}$ contain all auxiliary variables needed to simulate a full NUTS trajectory: 
a momentum draw $p\sim N(0,M)$, a Bernoulli sequence $\omega$ of left/right tree extensions, and the integration time $t$.  The NUTS update map
\[
\Phi(x, U) = \text{the point selected by NUTS driven by } U = (p,\omega,t)
\]
is measurable and produces the next state of the chain.  
All randomness used by NUTS (momentum, the random decisions in tree growth, and the integration time) is encapsulated in $U$, 
so NUTS also fits the form \eqref{eq:chain}.
\end{enumerate}

\subsection{Tangent-Decoupled Sequence}

Decoupling and tangency originate in the study of dependent sequences, where
one compares sums of adapted random variables with sums of conditionally
independent ``tangent’’ copies that share the same conditional distributions
given the past; see, for example, \cite{delapena-1994, delapena-gine}.  In that framework, a
companion sequence is constructed so that each term has the same one-step
conditional law as the original process, but with greatly simplified temporal
dependence.  Decoupling inequalities then allow one to transfer $L^p$ bounds
between the two sequences.

Motivated by this viewpoint, we now introduce a Markov–chain analogue in which
the companion process is obtained by running the same update map $\Phi$ along
the realized backbone $(X_i)$, but replacing the original auxiliary randomness
with an independent copy.  The construction is inspired by the classical
notion of tangency from decoupling theory,  but appears to be new in
the context of Markov chains generated through auxiliary randomness.

\begin{definition}[Tangent-decoupled sequence]\label{def:tangent}
Let $(X_i)_{i \geq 0}$ be generated by \eqref{eq:chain}. An independent i.i.d.\ copy $(\tU_i)_{i \geq 0}$ of $(U_i)$ is fixed. The tangent-decoupled sequence $(\tX_i)_{i \geq 0}$ is defined by
\begin{equation}\label{eq:decoupled}
\tX_{i+1} = \Phi(X_i, \tU_i), \quad i \geq 0.
\end{equation}
\end{definition}

Thus, the same update mechanism $\Phi$ is applied along the realized path of the original chain, but with fresh independent auxiliary variables at each step. Figure~\ref{fig:covariance-structure} illustrates the key structural difference
between the original Markov chain and the tangent-decoupled construction: in
the original chain, a single realization of the auxiliary randomness
propagates forward and induces long-range temporal dependence, whereas in the
tangent-decoupled chain each auxiliary variable affects only a single
transition.

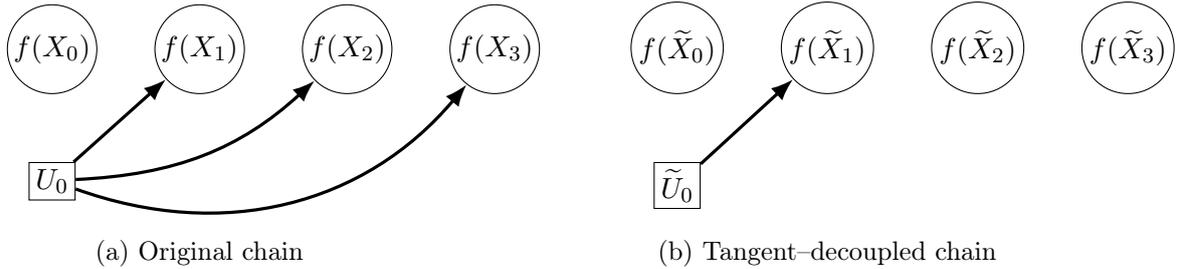
\begin{figure}[t]
\centering

\begin{minipage}[t]{0.48\textwidth}
\centering
\begin{tikzpicture}[
    >=Latex,
    node distance=0.75cm,
    obs/.style={circle, draw, inner sep=1pt, minimum size=13pt},
    noise/.style={rectangle, draw, inner sep=2.5pt, minimum size=11pt},
    covlabel/.style={font=\scriptsize},
    thickedge/.style={->, thick},
    mededge/.style={->, semithick},
    thinedge/.style={->, thin}
]

\node[obs] (X0) {$f(X_0)$};
\node[obs] (X1) [right=of X0] {$f(X_1)$};
\node[obs] (X2) [right=of X1] {$f(X_2)$};
\node[obs] (X3) [right=of X2] {$f(X_3)$};

\node[noise] (U0) [below=0.9cm of X0] {$U_0$};

\draw[very thick, ->] (U0) -- (X1);
\draw[very thick, ->] (U0) to[bend right=20] (X2);
\draw[very thick, ->] (U0) to[bend right=35] (X3);

\node[below=1.8cm of X1] {\small (a) Original chain};

\end{tikzpicture}
\end{minipage}
\hfill
\begin{minipage}[t]{0.48\textwidth}
\centering
\begin{tikzpicture}[
    >=Latex,
    node distance=0.75cm,
    obs/.style={circle, draw, inner sep=1pt, minimum size=13pt},
    noise/.style={rectangle, draw, inner sep=2.5pt, minimum size=11pt},
    covlabel/.style={font=\scriptsize},
    thickedge/.style={->, thick},
    mededge/.style={->, semithick},
    thinedge/.style={->, thin}
]

\node[obs] (Xt0) {$f(\widetilde X_0)$};
\node[obs] (Xt1) [right=of Xt0] {$f(\widetilde X_1)$};
\node[obs] (Xt2) [right=of Xt1] {$f(\widetilde X_2)$};
\node[obs] (Xt3) [right=of Xt2] {$f(\widetilde X_3)$};

\node[noise] (Ut0) [below=0.9cm of Xt0] {$\widetilde U_0$};


\draw[very thick, ->] (Ut0) -- (Xt1);

\node[below=1.8cm of Xt1] {\small (b) Tangent--decoupled chain};

\end{tikzpicture}
\end{minipage}

\caption{
Propagation of auxiliary randomness in the original chain (panel~(a)) versus the
tangent--decoupled chain (panel~(b)).  
In the original chain, $U_0$ influences all downstream states.  
In the tangent--decoupled chain, $\widetilde U_0$ affects only one transition.  
}
\label{fig:covariance-structure}
\end{figure}

\subsection{Main results}

This work develops a general theory of \emph{decoupling} for Markov
chains generated by i.i.d.\ auxiliary randomness.  Our contributions have two
parts.  First, we show that the tangent--decoupled sequence preserves the
ergodic mean of the original chain and therefore yields a valid estimator of
$\mu(f)$.  Second, we prove a sharp nonasymptotic variance comparison that
holds uniformly in the sample size and simultaneously for all observables in
$L^2(\mu)$.

\begin{theorem}[Consistency of the tangent--decoupled estimator]
\label{thm:intro-consistency}
Let $(X_i)_{i\ge 0}$ be a stationary and ergodic Markov chain generated by
\eqref{eq:chain} with invariant distribution $\mu$, and let
$(\widetilde X_i)_{i\ge 1}$ be its tangent--decoupled companion process defined
in \eqref{eq:decoupled}.  Assume that $X_0\sim\mu$.
If $f\in L^1(\mu)$ and $f\circ\Phi\in L^2(\mu\times\nu)$, then
\[
\frac{1}{n}\sum_{i=1}^n f(\widetilde X_i)
\;\xrightarrow{\mathrm{a.s.}}\;
\mu(f).
\]
\end{theorem}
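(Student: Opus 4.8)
The plan is to recast the decoupled ergodic average as an ordinary Birkhoff average of a single fixed function along one stationary ergodic sequence, so that the pointwise ergodic theorem applies directly. First I would reindex: for $i\ge 1$ we have $\tX_i=\Phi(X_{i-1},\tU_{i-1})$, so with $g:=f\circ\Phi:S\times\mathbb{A}\to\R$ and $Z_i:=(X_i,\tU_i)$,
\[
\frac{1}{n}\sum_{i=1}^n f(\tX_i)=\frac{1}{n}\sum_{i=1}^n g(X_{i-1},\tU_{i-1})=\frac{1}{n}\sum_{j=0}^{n-1} g(Z_j).
\]
Everything then reduces to a law of large numbers for the sequence $(g(Z_j))_{j\ge0}$.

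Next I would establish that $(Z_j)_{j\ge0}$ is stationary and ergodic. Stationarity is immediate: since $X_0\sim\mu$ and $\mu P=\mu$, the backbone $(X_i)$ is stationary, and $(\tU_i)$ is i.i.d.\ and independent of $(X_i)$, so the joint law of $(Z_j)$ is the product of the two shift-invariant laws and is preserved by the coordinatewise shift. Ergodicity is the crux of the argument. I would view $(X_i)$ as a stationary \emph{ergodic} system under the shift (this is the hypothesis) and $(\tU_i)$ as a Bernoulli shift; because the $\tU_i$ are i.i.d., this Bernoulli shift is mixing, hence weakly mixing. Invoking the classical fact that the product of any ergodic system with a weakly mixing system is again ergodic, and using that the independence $(X_i)\perp(\tU_i)$ makes the joint system exactly this product, I conclude that $(Z_j)$ is ergodic.

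With stationarity and ergodicity in hand, I would finish with Birkhoff's pointwise ergodic theorem. The hypothesis $f\circ\Phi\in L^2(\mu\times\nu)\subset L^1(\mu\times\nu)$ gives $g\in L^1$ for the stationary marginal law $\mu\times\nu$ of $Z_0$, so
\[
\frac{1}{n}\sum_{j=0}^{n-1} g(Z_j)\;\xrightarrow{\mathrm{a.s.}}\;\E[g(Z_0)].
\]
It remains to identify the limit: since $\tU_0\stackrel{d}{=}U_0$ is independent of $X_0\sim\mu$, the variable $\Phi(X_0,\tU_0)$ has law $\mu P=\mu$, whence $\E[g(Z_0)]=\E\bigl[f(\Phi(X_0,\tU_0))\bigr]=\mu(f)$, which is well defined because $f\in L^1(\mu)$.

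I expect the ergodicity step to be the only real obstacle. The point to be careful about is that stationarity and independence alone are \emph{not} enough: the product of two arbitrary ergodic systems need not be ergodic. The argument genuinely relies on one factor being an i.i.d.\ (Bernoulli) shift, which supplies the weak mixing needed to force ergodicity of the product. Note also that the $L^2$ hypothesis on $f\circ\Phi$ is stronger than necessary for consistency; only its $L^1$ consequence is used here, with the full $L^2$ strength reserved for the variance comparison.
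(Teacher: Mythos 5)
Your proof is correct, but it takes a genuinely different route from the paper's. The paper splits the decoupled average into a backbone term $A_n=\frac{1}{n}\sum_{i=1}^n g(X_{i-1})$ with $g(x)=\E[f(\Phi(x,U_0))]=(Pf)(x)$, handled by the ergodic theorem for the backbone chain, plus a fluctuation term $B_n=\frac{1}{n}\sum_{i=1}^n\bigl(f(\tX_i)-g(X_{i-1})\bigr)$, which it kills by conditioning on $\G=\sigma(X_i:i\ge0)$: given $\G$ the summands are independent and mean zero (Lemma~\ref{lem:cond-indep}), and the $L^2$ hypothesis makes $\sum_{i\ge1}\Var(Y_i\mid\G)/i^2$ finite almost surely, so Kolmogorov's strong law for independent, non-identically distributed variables applies conditionally. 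You instead lift the problem to the joint sequence $Z_j=(X_j,\tU_j)$ and apply Birkhoff's theorem once; the only nontrivial point is ergodicity of this joint system, which you correctly obtain by identifying it (via independence) with the product of the backbone shift and the Bernoulli shift on $(\tU_j)$, and invoking the fact that a weakly mixing system times an ergodic system is ergodic --- and you rightly flag that stationarity and independence of two merely ergodic factors would not suffice, which is exactly where a naive version of this argument would fail. Each route buys something: yours needs only $f\circ\Phi\in L^1(\mu\times\nu)$ (equivalently $f\in L^1(\mu)$, since $\mu P=\mu$ gives $\|f\circ\Phi\|_{L^1(\mu\times\nu)}=\mu(|f|)$), so the $L^2$ hypothesis is superfluous for consistency, as you observe; the paper's argument is more self-contained probabilistically, using only conditional independence and a classical SLLN rather than the ergodic-theoretic product theorem, and its backbone/fluctuation decomposition carries structural information that the paper reuses elsewhere (e.g.\ Remark~\ref{rem:multi-tangent-variance} on Rao--Blackwellization, where the limit of the backbone term $A_n$ is precisely the object of interest).
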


The tangent-decoupled sequence is therefore not merely an auxiliary
construction: it yields a consistent Monte Carlo estimator of $\mu(f)$.
This result is noteworthy for several reasons.  The sequence $(\widetilde X_i)$
is not a Markov chain and is not generated by the transition kernel $P$; it is
obtained by replacing the auxiliary randomness driving the original chain with
an independent copy.  Such a perturbation typically destroys the long--run
behavior of the chain, and there is no \emph{a priori} reason to expect
empirical averages based on $(\widetilde X_i)$ to converge to the correct
limit.

Nevertheless, Theorem~\ref{thm:intro-consistency} shows that the
tangent-decoupled sequence remains ergodically correct \emph{in the sense of
empirical averages}: for every integrable observable $f$, its empirical mean
converges almost surely to $\mu(f)$, provided the backbone chain is stationary
and ergodic and mild integrability conditions hold.  Thus $(\widetilde X_i)$
provides a second, structurally different Monte Carlo estimator of $\mu(f)$;
one whose dependence structure is much simpler, since all stochastic temporal
dependence arising from the auxiliary randomness has been removed.

This structural disentangling of the auxiliary randomness is what enables the
second main result: a uniform finite--sample comparison of variances.

\begin{theorem}[Finite--sample and asymptotic variance comparison]
\label{thm:variance_intro}
Let $(X_i)_{i\ge0}$ be generated by \eqref{eq:chain} and let
$(\widetilde X_i)_{i\ge0}$ be its tangent-decoupled companion sequence defined
in \eqref{eq:decoupled}.  Then for every $f\in L^2(\mu)$ and every sample size $N\ge1$,
\begin{equation}\label{eq:finite-sample-variance}
\Var\!\left(\sum_{i=1}^N f(X_i)\right)
\;\le\;
2\,
\Var\!\left(\sum_{i=1}^N f(\widetilde X_i)\right).
\end{equation}
Consequently, the corresponding time-average variance constants satisfy
\begin{equation}\label{eq:asymptotic-variance}
\sigma_f^2 \;\le\; 2\,\widetilde\sigma_f^{\,2},
\qquad f\in L^2(\mu).
\end{equation}
\end{theorem}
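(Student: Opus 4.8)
The plan is to realize the pair $(f(X_i))$ and $(f(\tX_i))$ as a \emph{tangent} pair in the decoupling sense and then exploit that tangency forces the two partial sums to share a common predictable part. First I would fix the backbone $\G=\sigma(X_i:i\ge0)$ and work in the enlarged filtration $\mathcal{H}_i=\sigma(X_0,\dots,X_i,\tU_0,\dots,\tU_{i-1})$, with respect to which both $f(X_i)$ and $f(\tX_i)=f(\Phi(X_{i-1},\tU_{i-1}))$ are adapted. Since $U_{i-1}$ and $\tU_{i-1}$ are i.i.d.\ with law $\nu$ and independent of $\mathcal{H}_{i-1}$, I obtain $\E[f(X_i)\mid\mathcal{H}_{i-1}]=\E[f(\tX_i)\mid\mathcal{H}_{i-1}]=(Pf)(X_{i-1})$. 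This is precisely the assertion that the two sequences are tangent, with a common one-step predictable mean $(Pf)(X_{i-1})$.

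Writing $D_i=f(X_i)-(Pf)(X_{i-1})$ and $\tilde D_i=f(\tX_i)-(Pf)(X_{i-1})$, both $(D_i)$ and $(\tilde D_i)$ are $(\mathcal{H}_i)$-martingale differences, and summing over $i=1,\dots,N$ gives
\[
\sum_{i=1}^N f(X_i)=A_N+\sum_{i=1}^N D_i,\qquad \sum_{i=1}^N f(\tX_i)=A_N+\sum_{i=1}^N\tilde D_i,
\]
where $A_N=\sum_{i=1}^N (Pf)(X_{i-1})$ is the \emph{same} $\G$-measurable variable in both lines. The next step records two facts. First, the conditional second moments agree: $\E[D_i^2\mid\mathcal{H}_{i-1}]=\E[\tilde D_i^2\mid\mathcal{H}_{i-1}]$ almost surely, both equal to the one-step conditional variance $v(X_{i-1})$ of $f(\Phi(X_{i-1},\cdot))$, so by martingale orthogonality $\Var(\sum_i D_i)=\Var(\sum_i\tilde D_i)=\sum_i\E[D_i^2]=:V$. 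Second, the decoupling identity $\E[\tilde D_i\mid\G]=0$ holds, because $\tU_{i-1}$ is independent of $\G$ while $X_{i-1}$ is $\G$-measurable; this makes the decoupled martingale part orthogonal to every $\G$-measurable variable, in particular to $A_N$.

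The second fact yields the exact Pythagorean identity $\Var(\sum_{i=1}^N f(\tX_i))=\Var(A_N)+V$, whereas for the original chain $A_N$ and $\sum_i D_i$ are both $\G$-measurable and may be arbitrarily correlated. Applying the elementary bound $\Var(B+C)\le 2\Var(B)+2\Var(C)$ with $B=A_N$ and $C=\sum_i D_i$, and invoking the equal martingale variances, gives
\[
\Var\!\Big(\sum_{i=1}^N f(X_i)\Big)\le 2\Var(A_N)+2V=2\,\Var\!\Big(\sum_{i=1}^N f(\tX_i)\Big),
\]
which is \eqref{eq:finite-sample-variance}; no stationarity or mixing is used. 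This is exactly the sharp $L^2$ tangent decoupling inequality of de la Pe\~na, Yao, and Alemayehu specialized to the tangent pair above, and the constant $2$ is seen to be sharp in the regime $\Var(A_N)=V$ with $A_N$ and $\sum_i D_i$ perfectly correlated. Finally, under the stationarity and ergodicity hypotheses that define the time-average variance constants, I would divide by $N$ and let $N\to\infty$, so that the existence of $\sigma_f^2$ and $\tsigma_f^{\,2}$ as limits of $N^{-1}\Var(\cdot)$ transfers \eqref{eq:finite-sample-variance} to \eqref{eq:asymptotic-variance}.

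I expect the main obstacle to be the setup rather than the final estimate: one must choose a filtration in which $(f(X_i))$ and $(f(\tX_i))$ are \emph{simultaneously} adapted and tangent, even though $\tX_i$ depends on fresh randomness absent from the chain's natural filtration, and then verify the two structural facts above — the equality of conditional variances and the $\G$-orthogonality of the decoupled martingale part — on which the factor $2$ rests. Once these are in place, the inequality is immediate from the common-drift decomposition.
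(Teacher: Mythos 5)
Your proof is correct, but it takes a genuinely different route from the paper. The paper's proof is modular: it verifies that $(f(X_i))$ and $(f(\tX_i))$ are tangent with respect to $\F_{i-1}=\sigma(X_0,\dots,X_{i-1};\tX_1,\dots,\tX_{i-1})$ (Lemmas~\ref{lem:tangency} and~\ref{lem:tangency-preserved}), notes that $(f(\tX_i))$ is conditionally independent given $\G$ (Lemma~\ref{lem:cond-indep}), and then invokes the sharp $L^2$ tangent decoupling inequality of de la Pe\~na, Yao, and Alemayehu \cite{delapena-yao-alemayehu} as a black box, finally correcting for the equal means. You instead give a self-contained argument that never cites that inequality: both partial sums share the \emph{same} predictable part $A_N=\sum_{i=1}^N (Pf)(X_{i-1})$; the martingale parts $\sum_i D_i$ and $\sum_i \widetilde D_i$ have equal variance $V$ because their conditional second moments given $\mathcal H_{i-1}$ coincide; the decoupled martingale part satisfies the exact Pythagorean identity $\Var\bigl(\sum_i f(\tX_i)\bigr)=\Var(A_N)+V$ thanks to $\E[\widetilde D_i\mid\G]=0$; and the elementary bound $\Var(B+C)\le 2\Var(B)+2\Var(C)$ then delivers the factor $2$. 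This makes the origin of the constant completely transparent (it is the parallelogram inequality applied to the drift-plus-martingale decomposition, with equality exactly when the drift and martingale parts of the original chain are perfectly correlated with equal variances, consistent with the paper's AR(1) example at $a=\tfrac12$), and in effect it reproves the cited decoupling inequality in this special case; the trade-off is that your argument is tied to the Markov update structure, whereas the paper's route transfers unchanged to any tangent pair. Two minor caveats, both shared with the paper's own proof: finiteness of all the variances implicitly requires stationary marginals, so that $f\in L^2(\mu)$ gives $f(X_i)\in L^2$ and, via Jensen's inequality for the kernel, $(Pf)(X_{i-1})\in L^2$; and the passage from \eqref{eq:finite-sample-variance} to \eqref{eq:asymptotic-variance} presupposes that the two time-average variance constants exist as limits, exactly as assumed in Section~\ref{sec:varcomp}.
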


The inequality \eqref{eq:finite-sample-variance} is noteworthy for several
reasons.  First, it is \emph{uniform in}~$N$: the comparison holds for every
finite sample size, without any asymptotic limiting argument.  Second, it
applies \emph{simultaneously to all} $f\in L^2(\mu)$, without regularity
conditions such as boundedness, Lipschitz continuity, or moment assumptions
beyond $L^2$.  Third, it requires neither reversibility, spectral structure,
nor mixing conditions; the auxiliary--variable representation of the transition
mechanism alone suffices.  Finally, the constant $2$ is best possible: for the
AR(1) chain with $a=\tfrac12$ and $f(x)=x$, equality holds
($\sigma_f^2 = 2\,\widetilde\sigma_f^{\,2}$); see Section~\ref{sec:ar1}.
To our knowledge, this is the first general variance comparison of this form
for Markov chains generated by i.i.d.\ auxiliary randomness.

As a consequence, for any MCMC algorithm of the form \eqref{eq:chain} that
admits a central limit theorem, the bound \eqref{eq:asymptotic-variance}
yields computable and provably conservative confidence intervals for
$\mu(f)$.  This is developed in Section~\ref{sec:CI}.  In particular, one may
estimate the asymptotic variance of the tangent--decoupled sequence and use
$\sigma_f^2 \le 2\,\widetilde\sigma_f^{\,2}$ to produce a valid upper bound for
the asymptotic variance of the original chain, without relying on spectral
methods, autocorrelation truncation, or mixing--rate assumptions.  This leads
to a simple and robust diagnostic for quantifying Monte Carlo uncertainty
across a wide range of modern samplers, including Gibbs updates, Langevin
algorithms, HMC, and NUTS.

The effectiveness of this approach is demonstrated through empirical studies in
Section~\ref{sec:ar1} and Section~\ref{sec:mcmc}, where we examine both the
variance comparison and the behavior of the tangent–decoupled estimator itself.
Across all examples, the finite–sample inequality 
\eqref{eq:finite-sample-variance} holds for both linear and nonlinear
observables, and the resulting confidence intervals are consistently
conservative yet informative, even in regimes where traditional HAC estimators
are unstable.  We also verify numerically the almost–sure consistency asserted
by Theorem~\ref{thm:intro-consistency}: empirical averages based on the
tangent–decoupled sequence converge reliably to the correct mean $\mu(f)$,
illustrating that the estimator is not only theoretically justified but also
stable in practice.  Taken together, these experiments show that tangent
decoupling provides a broadly applicable and practically robust tool for
quantifying Monte Carlo uncertainty across a wide range of algorithms,
dimensions, and correlation structures.

\section{Tangent Decoupling for Markov Chains}

The goal of this section is to formalize and analyze the tangent–decoupled 
construction introduced in the introduction.  Although the sequence 
$(\widetilde X_i)$ is neither Markovian nor generated by the transition kernel 
$P$, it enjoys two structural properties that make it surprisingly tractable: 
(i) given the backbone path $(X_i)$, the variables $(\widetilde X_i)$ become 
conditionally independent, and (ii) each $\widetilde X_i$ has the correct 
stationary marginal distribution.  These two observations place the 
tangent–decoupled process in a regime where classical decoupling ideas can be 
applied, and they are the ingredients that ultimately lead to the consistency 
and variance comparison theorems stated in Section~1.  We now establish these 
properties in turn.

\subsection{Conditional Independence Given the Backbone}

A key property of the tangent--decoupled sequence is that,
although $(\widetilde X_i)$ is not Markovian and does not evolve according to
the kernel $P$, its randomness is ``conditionally fresh’’ at each step once the
realized backbone path $(X_i)$ is fixed.  
All dependence between the $\widetilde X_i$ flows only through their common
conditioning on the backbone states $X_0, X_1,\ldots$. This mirrors the classical idea of \emph{decoupling} in probability: after
conditioning on a suitable $\sigma$--algebra, the ``tangent’’ sequence becomes
independent across time.  

The lemma below formalizes this conditional independence property.

\begin{lemma}[Conditional independence]\label{lem:cond-indep}
Let $(X_i)$ and $(\tX_i)$ be as in \eqref{eq:chain}--\eqref{eq:decoupled}, and let $\G := \sigma(X_i: i \geq 0)$ be the $\sigma$-algebra generated by the backbone chain $(X_i)$. Then the sequence $(\tX_i)_{i \geq 1}$ is conditionally independent given $\G$.  
\end{lemma}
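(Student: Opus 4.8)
The plan is to reduce the conditional independence of $(\tX_i)_{i\ge1}$ given $\G$ to the elementary fact that each $\tX_{i+1}=\Phi(X_i,\tU_i)$ is driven by a \emph{distinct} auxiliary variable $\tU_i$, together with the observation that the whole family $(\tU_i)_{i\ge0}$ is independent of $\G$. First I would note that the backbone is a measurable function of the initial state and the \emph{original} noise alone, so that $\G=\sigma(X_i:i\ge0)\subseteq\sigma\bigl(X_0,(U_j)_{j\ge0}\bigr)$. Since $(\tU_i)_{i\ge0}$ is by construction an independent copy of $(U_i)_{i\ge0}$ and is independent of $\bigl(X_0,(U_j)_{j\ge0}\bigr)$, it follows that $(\tU_i)_{i\ge0}$ is independent of $\G$. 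Combined with the i.i.d.\ assumption on $(\tU_i)$, this means that conditionally on $\G$ the family $(\tU_i)$ retains its unconditional product law: the $\tU_i$ remain mutually independent after conditioning, and each $\tX_{i+1}$ is, given $\G$, a measurable function of $\tU_i$ only (with the $\G$-measurable parameter $X_i$ frozen).

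Second, I would make the reduction to a finite factorization precise. Fix distinct indices $i_1,\dots,i_m\ge1$ and bounded measurable test functions $g_1,\dots,g_m$. Writing $\tX_{i_k}=\Phi(X_{i_k-1},\tU_{i_k-1})$ and noting that the shifted indices $i_k-1\ge0$ are still distinct, the goal becomes the identity
\[
\E\Bigl[\textstyle\prod_{k=1}^m g_k(\tX_{i_k})\,\Big|\,\G\Bigr]
=\prod_{k=1}^m \E\bigl[g_k(\tX_{i_k})\mid\G\bigr].
\]
Here each $X_{i_k-1}$ is $\G$-measurable while the vector $(\tU_{i_k-1})_{k=1}^m$ is independent of $\G$. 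The standard substitution (``freezing'') property of conditional expectation then replaces the $\G$-measurable arguments by frozen values $y_k=X_{i_k-1}$ and the conditional expectation by an ordinary expectation over the independent block $(\tU_{i_k-1})_k$. Because these auxiliary variables carry distinct indices, they are unconditionally mutually independent, so the frozen expectation factorizes into a product of single-coordinate expectations; re-applying the freezing property to each factor recovers $\E[g_k(\tX_{i_k})\mid\G]$, giving the displayed product form.

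The conclusion then follows from the standard criterion that conditional independence given $\G$ is equivalent to this factorization over all finite families of distinct indices and bounded measurable $g_k$. The step I expect to require the most care is the very first one: establishing that $(\tU_i)$ is independent of the \emph{entire} $\sigma$-algebra $\G$, rather than merely of each $X_i$ marginally. This joint independence is precisely what licenses treating the conditional law of the auxiliary block as a genuine product measure, and it is the crux that makes the decoupling work; it rests on the inclusion $\G\subseteq\sigma\bigl(X_0,(U_j)_{j\ge0}\bigr)$ together with the independence of the two noise families. The remaining measurability bookkeeping in the freezing step---joint measurability of $(y,u)\mapsto\prod_k g_k(\Phi(y_k,u_k))$---is routine.
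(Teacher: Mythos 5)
Your proposal is correct and takes essentially the same route as the paper's proof: both arguments rest on the two observations that the family $(\widetilde U_i)_{i\ge0}$ is independent of $\mathcal G$ and that, with the backbone frozen, each $\widetilde X_k$ is a measurable function of its own distinct $\widetilde U_{k-1}$. The only difference is one of rigor---you justify the independence of $(\widetilde U_i)$ from $\mathcal G$ via the inclusion $\mathcal G\subseteq\sigma\bigl(X_0,(U_j)_{j\ge0}\bigr)$ and carry out the product factorization with the freezing lemma, whereas the paper asserts these steps informally.
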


\begin{proof}
For each $k\ge1$,
\[
\widetilde X_k = \Phi(X_{k-1},\widetilde U_{k-1}),
\]
where $(\widetilde U_i)$ is i.i.d.\ and independent of $X_0$ and $(U_i)$.

Conditional on $\mathcal{G}$:

\begin{itemize}
    \item the backbone values $X_0, X_1, \ldots$ are fixed;
    \item the maps $u \mapsto \Phi(X_{k-1},u)$ are deterministic;
    \item the auxiliary variables $\widetilde U_0,\widetilde U_1,\ldots$
          remain independent, since they are independent of $\mathcal{G}$.
\end{itemize}

Thus each $\widetilde X_k$ is a measurable function of an independent variable
$\widetilde U_{k-1}$, and hence the family $(\widetilde X_k)_{k\ge1}$ is
conditionally independent given $\mathcal{G}$.
\end{proof}
\subsection{Stationary Marginals}

In this part, we assume that $P$ admits a stationary distribution $\mu$ and that the chain is started in stationarity, so that $X_0 \sim \mu$ and hence $X_i \sim \mu$ for all $i \geq 0$.  
The next lemma shows that stationarity is preserved exactly: every
$\widetilde X_i$ has distribution $\mu$.  

\begin{lemma}[Stationary marginals under tangent decoupling]\label{lem:marginals}
Let $(\tX_i)$ be defined by \eqref{eq:decoupled}. Then, for every $i \geq 1$,
\[
\tX_i \sim \mu.
\]
\end{lemma}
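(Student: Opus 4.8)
The plan is to compute the law of $\tX_i$ directly from its defining relation $\tX_i = \Phi(X_{i-1},\tU_{i-1})$ and recognize it as the image of the invariant measure under one application of the transition kernel. The key point is that, in this single expression, the backbone state $X_{i-1}$ already carries the stationary law $\mu$, while the fresh auxiliary variable $\tU_{i-1}$ is independent of it and distributed as $\nu$. Consequently $\tX_i$ has exactly the law of $\Phi(X_{i-1},U)$ with $X_{i-1}\sim\mu$ and $U\sim\nu$ independent, which is precisely a one-step push-forward of $\mu$ under $P$. Invariance $\mu P=\mu$ then closes the argument.

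Concretely, I would first record the two distributional facts that feed into this. By the stationarity assumption $X_0\sim\mu$ and the construction \eqref{eq:chain}, every backbone state satisfies $X_{i-1}\sim\mu$. Next I would establish the independence of $\tU_{i-1}$ from $X_{i-1}$: since $X_{i-1}$ is a measurable function of $X_0$ and $U_0,\dots,U_{i-2}$, it is measurable with respect to $\sigma\!\bigl(X_0,(U_j)_{j\ge0}\bigr)$, whereas $\tU_{i-1}$ belongs to the independent copy $(\tU_j)$ and is by construction independent of this $\sigma$-algebra. Hence $(X_{i-1},\tU_{i-1})$ has product law $\mu\otimes\nu$.

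With these in hand the computation is immediate: for any $A\in\mathcal S$,
\[
\Prob(\tX_i\in A)
=\Prob\!\bigl(\Phi(X_{i-1},\tU_{i-1})\in A\bigr)
=\int_S \Prob\!\bigl(\Phi(x,\tU_{i-1})\in A\bigr)\,\mu(dx)
=\int_S P(x,A)\,\mu(dx),
\]
where the second equality uses the product law $\mu\otimes\nu$ of $(X_{i-1},\tU_{i-1})$ and the third is the definition $P(x,A)=\Prob(\Phi(x,U_0)\in A)$ together with $\tU_{i-1}\stackrel{d}{=}U_0$. The right-hand side is $(\mu P)(A)=\mu(A)$ by invariance, giving $\tX_i\sim\mu$.

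I do not expect a genuine obstacle here; the only step requiring care is the independence claim $X_{i-1}\perp\tU_{i-1}$, which rests on correctly tracking the measurability of $X_{i-1}$ and invoking that $(\tU_j)$ is an independent copy of $(U_j)$ rather than the same sequence. Once that is stated cleanly, the conclusion follows from a one-line Fubini-type disintegration and the kernel identity $\mu P=\mu$.
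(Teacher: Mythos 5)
Your proof is correct and follows essentially the same route as the paper's: condition on the backbone state $X_{i-1}$, use independence of the fresh auxiliary variable to identify the conditional law of $\tX_i$ as $P(X_{i-1},\cdot)$, integrate against $\mu$, and invoke invariance $\mu P=\mu$. The only differences are cosmetic (you work with events $A\in\mathcal S$ rather than bounded test functions $g$, and you spell out the independence of $\tU_{i-1}$ from $\sigma(X_0,(U_j)_{j\ge0})$, which the paper states more tersely).
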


\begin{proof}
Fix $i \geq 0$ and let $g: S \to \R$ be bounded and measurable. Conditioning on $X_i$ and using independence of $\tU_i$,
\[
\E[g(\tX_{i+1}) | X_i] = \int_S g(y)\, P(X_i, dy).
\]
Since $X_i \sim \mu$,
\[
\E[g(\tX_{i+1})] = \int_S \int_S g(y)\, P(x, dy)\, \mu(dx) = \int_S g(y)\, \mu(dy),
\]
the last equality being the stationarity relation $\mu P = \mu$. Thus $\tX_{i+1} \sim \mu$.
\end{proof}

\subsection{Consistency of the Tangent--Decoupled Estimator}

A natural question raised by Lemmas~\ref{lem:cond-indep} and
\ref{lem:marginals} is whether the tangent--decoupled sequence $(\tX_i)$,
which is neither Markovian nor generated by the original transition kernel,
can nonetheless be used to estimate expectations with respect to the target
distribution $\mu$.  The key observation is that $(\tX_i)$ has the correct
stationary marginals (Lemma~\ref{lem:marginals}) and exhibits conditional
independence given the backbone (Lemma~\ref{lem:cond-indep}), a structure that
is fundamentally different from the dependence present in the original chain.

The following theorem shows that, under the same mild conditions ensuring a
law of large numbers for $(X_i)$, the empirical mean based on $(\tX_i)$
converges almost surely to $\mu(f)$ for every integrable observable $f$.
Thus the tangent--decoupled sequence provides an estimator with the
\emph{same} asymptotic mean as the original chain but with a  simplified
dependence structure.

\begin{theorem}[Consistency of the tangent--decoupled estimator]
\label{thm:decoupled-consistency}
Let $(X_i)_{i\ge 0}$ be a stationary and ergodic Markov chain with invariant
distribution $\mu$, defined by \eqref{eq:chain}, and let
$(\widetilde X_i)_{i\ge 0}$ be the associated tangent--decoupled sequence
defined by \eqref{eq:decoupled}, with $X_0\sim\mu$.
Let $f:S\to\R$ satisfy $f\in L^1(\mu)$ and
$f\circ\Phi \in L^2(\mu\times\nu)$.
Then
\[
\frac{1}{n}\sum_{i=1}^n f(\tX_i)
\;\xrightarrow{\textnormal{a.s.}}\;
\mu(f).
\]
\end{theorem}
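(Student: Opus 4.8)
The plan is to establish almost-sure convergence of $n^{-1}\sum_{i=1}^n f(\tX_i)$ to $\mu(f)$ by splitting the problem into a mean part and a fluctuation part, exploiting the two structural lemmas already proved. Write $g = f\circ\Phi : S\times\mathbb{A}\to\R$, so that $f(\tX_{i+1}) = g(X_i,\tU_i)$. Define the conditional mean $m(x) := \E[g(x,\tU_0)] = \int_S f(y)\,P(x,dy) = (Pf)(x)$. By Lemma~\ref{lem:marginals} (or directly by stationarity), $m\in L^1(\mu)$ with $\mu(m) = \mu(Pf) = \mu(f)$. I would then decompose
\begin{equation}\label{eq:decomp-plan}
\frac{1}{n}\sum_{i=1}^n f(\tX_i)
= \frac{1}{n}\sum_{i=0}^{n-1} m(X_i)
+ \frac{1}{n}\sum_{i=0}^{n-1}\bigl(g(X_i,\tU_i) - m(X_i)\bigr).
\end{equation}

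The first term on the right of \eqref{eq:decomp-plan} is an ergodic average of the function $m = Pf$ along the \emph{original} stationary ergodic chain $(X_i)$. Since $m\in L^1(\mu)$, Birkhoff's ergodic theorem applies and gives $n^{-1}\sum_{i=0}^{n-1} m(X_i)\to\mu(m) = \mu(f)$ almost surely. This is where the hypothesis that $(X_i)$ is stationary and ergodic is used, and where the $L^1$ assumption on $f$ enters (through $m = Pf$, which is $L^1(\mu)$ since conditional expectation is an $L^1$ contraction). The remaining work is to show the second term, a normalized sum of centered terms $D_i := g(X_i,\tU_i) - m(X_i)$, vanishes almost surely.

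For the fluctuation term I would condition on the backbone $\G = \sigma(X_i : i\ge0)$. By Lemma~\ref{lem:cond-indep} the $\tX_i$, hence the $D_i$, are conditionally independent given $\G$; moreover each $D_i$ is conditionally centered, $\E[D_i\mid\G] = 0$, and the partial sums $S_n := \sum_{i=0}^{n-1} D_i$ form a martingale with respect to the filtration $\G\vee\sigma(\tU_0,\dots,\tU_{i-1})$. The plan is to invoke a strong law for martingales (or equivalently Kolmogorov's criterion for independent summands, applied conditionally): it suffices to control $\sum_i \Var(D_i\mid\G)/i^2$, or more simply to use the two-series / Kronecker argument together with a uniform second-moment bound. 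Here the hypothesis $f\circ\Phi = g\in L^2(\mu\times\nu)$ is exactly what delivers integrability of the conditional variances: $\E[\Var(D_i\mid\G)] = \E\bigl[g(X_i,\tU_i)^2\bigr] - \E\bigl[m(X_i)^2\bigr] \le \|g\|_{L^2(\mu\times\nu)}^2$, uniformly in $i$ by stationarity of $(X_i)$. Summing $i^{-2}\Var(D_i\mid\G)$ and taking expectations shows the series is finite in $L^1$, hence finite almost surely, so Kronecker's lemma yields $n^{-1}S_n\to0$ almost surely.

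The main obstacle is the almost-sure control of the conditionally independent but \emph{non-identically-distributed} fluctuation sum: the conditional variances $\Var(D_i\mid\G)$ depend on the realized backbone and are not bounded pathwise, only in expectation. The clean route is to first establish $L^1$-summability of $\sum_i i^{-2}\Var(D_i\mid\G)$ by Tonelli and the uniform $L^2$ bound, conclude almost-sure finiteness of this random series, and then apply the conditional Kolmogorov strong law (valid on the event where the variance series converges) followed by Kronecker's lemma. One should double check that the resulting almost-sure statement, obtained first conditionally on $\G$ and then integrated, holds unconditionally; this follows because a property holding almost surely conditional on $\G$ for $\Prob$-almost every realization of $\G$ holds $\Prob$-almost surely. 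Combining the Birkhoff limit for the mean term with the vanishing of the fluctuation term completes the proof.
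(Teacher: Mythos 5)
Your proposal is correct and follows essentially the same route as the paper's proof: the same decomposition into a backbone term $\frac1n\sum_i (Pf)(X_i)$ handled by Birkhoff's ergodic theorem and a fluctuation term handled by conditioning on $\G$, using conditional independence, the a.s.\ finiteness of $\sum_i \Var(D_i\mid\G)/i^2$ (deduced from its finite expectation via the $L^2(\mu\times\nu)$ hypothesis and stationarity), and the conditional Kolmogorov strong law. Your explicit attention to integrating the conditional almost-sure statement back to an unconditional one is a point the paper passes over silently, but the argument is the same.
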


\begin{proof}
Define the conditional expectation
\[
g(x) := \E[f(\Phi(x,U_0))],
\]
which is finite $\mu$-a.e.\ because $f\circ\Phi\in L^2(\mu\times\nu)$ (hence in $L^1(\mu\times\nu)$ since $\mu\times\nu$ is a probability measure).
We decompose the tangent--decoupled estimator into a ``backbone term''
and a ``fluctuation term'':
\[
\frac{1}{n}\sum_{i=1}^n f(\tX_i)
= \underbrace{\frac{1}{n}\sum_{i=1}^n g(X_{i-1})}_{A_n}
  \;+\;
  \underbrace{\frac{1}{n}\sum_{i=1}^n \bigl(f(\tX_i)-g(X_{i-1})\bigr)}_{B_n}.
\]

We treat the two terms separately.

\paragraph{Step 1: Convergence of \(A_n\).}
Since $\mu$ is stationary,
\[
\mu(g)
= \int_S g(x)\,\mu(dx)
= \int_S \int_U f(\Phi(x,u))\,\nu(du)\,\mu(dx)
= \int_S f(y)\,\mu(dy)
= \mu(f),
\]
using the invariance identity $\mu P=\mu$.  
Because $g\in L^1(\mu)$ and $(X_i)$ is ergodic, the ergodic theorem gives
\[
A_n = \frac{1}{n}\sum_{i=1}^n g(X_i)
\;\xrightarrow{\text{a.s.}}\;
\mu(g)=\mu(f).
\]

\paragraph{Step 2: Convergence of \(B_n\).}
Define
\[
Y_i := f(\tX_i) - g(X_{i-1}).
\]
We will show that $\frac{1}{n}\sum_{i=1}^n Y_i\to0$ a.s.

First, by Lemma~\ref{lem:cond-indep}, the sequence $(\tX_i)$ is
conditionally independent given $\G := \sigma(X_i:i\ge0)$.
Therefore, the sequence $(Y_i)$ is also conditionally independent given $\G$.

Second, for each $i$,
\[
\E[Y_i\mid X_{i-1}]
= \E\!\left[f\!\bigl(\Phi(X_{i-1},\tU_{i-1})\bigr)\,\middle|\, X_{i-1}\right]
  - g(X_{i-1})
= 0.
\]
Thus $(Y_i)$ is a sequence of conditionally independent,
conditionally mean-zero random variables.

Third, we compute conditional variances.
Since $f\circ\Phi\in L^2(\mu\times\nu)$,
\[
\E[Y_i^2]
= \E[(f(\Phi(X_i,\tU_i)) - g(X_i))^2]
< \infty.
\]
Define
\[
V_i := \Var(Y_i \mid X_i).
\]
Because $Y_i$ is centered given $X_i$,
\[
V_i = \E[Y_i^2 \mid X_i] \le \E[Y_i^2].
\]
Stationarity of $(X_i,\tU_i)$ implies $\E[Y_i^2]=\E[Y_0^2]$ for all $i$.

Consider the series
\[
S := \sum_{i=1}^\infty \frac{V_i}{i^2}.
\]
We have
\[
\E[S]
= \sum_{i=1}^\infty \frac{\E[V_i]}{i^2}
\;\le\; \E[Y_0^2]\sum_{i=1}^\infty\frac{1}{i^2}
<\infty.
\]
Since $S$ is nonnegative, $\E[S]<\infty$ implies
$S<\infty$ almost surely.

Now condition on $\G$.  Given $\G$, the variables $(Y_i)$ are independent,
mean-zero, and satisfy
\[
\sum_{i=1}^\infty \frac{\Var(Y_i\mid\G)}{i^2}
=\sum_{i=1}^\infty \frac{V_i}{i^2}
<\infty
\quad\text{a.s.}
\]
Thus Kolmogorov's strong law for independent, not necessarily
identically distributed, mean-zero variables yields
\[
B_n
= \frac{1}{n}\sum_{i=1}^n Y_i
\;\xrightarrow{\text{a.s.}}\;
0.
\]

\paragraph{Step 3: Combine the limits.}
We have shown
\[
A_n\to\mu(f)\quad\text{a.s.},
\qquad
B_n\to 0\quad\text{a.s.}
\]
and therefore
\[
\frac{1}{n}\sum_{i=1}^n f(\tX_i)
= A_n + B_n
\;\xrightarrow{\text{a.s.}}\;
\mu(f).
\]
\end{proof}

\begin{remark}[Effect of averaging multiple tangent draws]
\label{rem:multi-tangent-variance}
A natural extension of the tangent--decoupled estimator is to generate, for each
backbone state $X_i$, multiple independent tangent draws
\[
\tX_{i,1},\ldots,\tX_{i,M},
\qquad
\tX_{i,m} := \Phi(X_i,\tU_{i,m}), \quad \tU_{i,m}\stackrel{\mathrm{i.i.d.}}{\sim}\nu,
\]
and to consider the averaged estimator
\[
\hat\mu_{n,M}(f)
:= \frac{1}{nM}\sum_{i=1}^n\sum_{m=1}^M f(\tX_{i,m}).
\]
By conditional independence, this estimator remains unbiased and consistent for
$\mu(f)$.

As $M\to\infty$, the auxiliary randomness is averaged out and
\[
\hat\mu_{n,M}(f)
\;\longrightarrow\;
\frac{1}{n}\sum_{i=1}^n g(X_i),
\qquad
g(x):=\E[f(\Phi(x,U))]=(Pf)(x),
\]
almost surely for fixed $n$.  
In particular, the estimator remains consistent in the limit $M\to\infty$, since
$\mu(g)=\mu(f)$ by invariance of $\mu$ under $P$. Consequently, increasing $M$ does not yield an estimator whose variance approaches
$\Var_\mu(f)$; rather, it produces a Rao--Blackwellized estimator based on the
backbone chain with observable $g=Pf$.

If a central limit theorem holds for additive functionals of $(X_i)$, the
corresponding asymptotic variance is
\[
\sigma_g^2
=
\Var_\mu(g(X_0))
+2\sum_{k\ge1}\Cov_\mu\bigl(g(X_0),g(X_k)\bigr),
\]
which in general differs from the asymptotic variance
\[
\sigma_f^2
=
\Var_\mu(f(X_0))
+2\sum_{k\ge1}\Cov_\mu\bigl(f(X_0),f(X_k)\bigr)
\]
of the standard Markov chain estimator.
In particular, even in the limit $M\to\infty$, the variance is governed by the
temporal dependence of the backbone chain and not by the marginal variance
$\Var_\mu(f)$.

When the backbone kernel $P$ is reversible, the transformation $f\mapsto Pf$
attenuates high-frequency spectral components while leaving slow modes essentially
unchanged.  Thus averaging over tangent draws removes variance due to auxiliary
randomness but does not eliminate the intrinsic correlation structure of the backbone
chain.
\end{remark}

\subsection{Tangency and Its Preservation}

Tangency is the property that links the original Markov chain
$(X_i)$ with its tangent--decoupled companion $(\widetilde X_i)$.  In the
language of decoupling theory \cite{delapena-gine}, two sequences are
\emph{tangent} if they share the same conditional laws given the past.
This property is the key to transferring $L^2$ bounds from the conditionally
independent sequence $(\widetilde X_i)$ to the dependent sequence $(X_i)$.

\begin{definition}[Tangent sequences]
Let $(\mathcal F_i)_{i\ge 0}$ be a filtration.  
Two sequences $(d_i)_{i \geq 1}$ and $(e_i)_{i \geq 1}$ are $\F_{i-1}$-tangent if for each $i \geq 1$:
\[
\Law(d_i | \F_{i-1}) = \Law(e_i | \F_{i-1}) \quad \text{a.s.}
\] where $\mathcal L(\,\cdot\,|\,\mathcal F_{i-1})$ denotes the conditional distribution
given $\mathcal F_{i-1}$.
\end{definition}

\begin{lemma}[Tangency of original and decoupled chains]\label{lem:tangency}
Let $\F_{i-1}$ be the filtration defined by \[
\F_{i-1} = \sigma(X_0, X_1, \ldots, X_{i-1};~\tX_1, \ldots, \tX_{i-1}) \;.
\] The sequences $(X_i)_{i \geq 1}$ and $(\tX_i)_{i \geq 1}$ are $\F_{i-1}$-tangent.
\end{lemma}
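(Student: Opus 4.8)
The plan is to show that \emph{both} conditional laws $\Law(X_i\mid\F_{i-1})$ and $\Law(\tX_i\mid\F_{i-1})$ coincide almost surely with the one-step transition kernel $P(X_{i-1},\cdot)$; tangency then follows immediately, since the two conditional laws are equal to the same object. The two ingredients I would isolate are that $X_{i-1}$ is $\F_{i-1}$-measurable and that the fresh auxiliary variables $U_{i-1}$ and $\tU_{i-1}$ are \emph{each} independent of $\F_{i-1}$ with common law $\nu$.

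First I would unpack the dependence structure of $\F_{i-1}$. By \eqref{eq:chain}, the backbone states $X_1,\ldots,X_{i-1}$ are measurable functions of $X_0$ and $U_0,\ldots,U_{i-2}$; by \eqref{eq:decoupled} together with \eqref{eq:chain}, the decoupled states $\tX_1,\ldots,\tX_{i-1}$ are measurable functions of $X_0$, $U_0,\ldots,U_{i-3}$, and $\tU_0,\ldots,\tU_{i-2}$. Hence $\F_{i-1}\subseteq\sigma(X_0;U_0,\ldots,U_{i-2};\tU_0,\ldots,\tU_{i-2})$. Since the families $(U_j)$ and $(\tU_j)$ are i.i.d.\ and mutually independent (and independent of $X_0$), the variable $U_{i-1}$ is independent of this larger $\sigma$-algebra, hence of $\F_{i-1}$; the identical argument, with the roles of the two families exchanged, shows $\tU_{i-1}$ is independent of $\F_{i-1}$. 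Each has distribution $\nu$.

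Next I would invoke the standard freezing (disintegration) lemma: if $\xi$ is $\F_{i-1}$-measurable and $\eta$ is independent of $\F_{i-1}$, then for bounded measurable $g$ one has $\E[g(\Phi(\xi,\eta))\mid\F_{i-1}]=\bigl(\int g(\Phi(x,u))\,\nu(du)\bigr)\big|_{x=\xi}$ a.s., so the conditional law of $\Phi(\xi,\eta)$ given $\F_{i-1}$ is the pushforward of $\nu$ under $\Phi(\xi,\cdot)$. Applying this with $\xi=X_{i-1}$, $\eta=U_{i-1}$ gives $\Law(X_i\mid\F_{i-1})=P(X_{i-1},\cdot)$ a.s.; applying it instead with $\eta=\tU_{i-1}$ gives $\Law(\tX_i\mid\F_{i-1})=P(X_{i-1},\cdot)$ a.s., where $P(x,\cdot)$ is the pushforward of $\nu$ under $\Phi(x,\cdot)$, i.e.\ the transition kernel of \eqref{eq:chain}. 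The two conditional laws therefore agree a.s., which is precisely the $\F_{i-1}$-tangency condition.

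The main obstacle is the bookkeeping in the second step: one must verify that enlarging the conditioning $\sigma$-algebra to include the decoupled states $\tX_1,\ldots,\tX_{i-1}$ does not spoil the independence of the fresh variables. This is where tracking the exact index ranges is essential---$\tX_j$ for $j\le i-1$ involves only $\tU_0,\ldots,\tU_{i-2}$, never $\tU_{i-1}$ or $U_{i-1}$---so both fresh variables remain independent of $\F_{i-1}$ even after the enlargement. Once this independence is confirmed, the remainder is a routine application of the freezing lemma and the definition of $P$.
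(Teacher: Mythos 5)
Your proposal is correct and follows essentially the same route as the paper: identify that $U_{i-1}$ and $\tU_{i-1}$ are each independent of $\F_{i-1}$ with common law $\nu$, then conclude that both conditional laws equal the pushforward $P(X_{i-1},\cdot)$. The paper's proof simply asserts this independence and the resulting identity in one line, whereas you carefully verify it by tracking which auxiliary variables generate $\F_{i-1}$ and by invoking the freezing lemma explicitly---useful bookkeeping, but not a different argument.
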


\begin{proof}
For each $i\ge1$,
\[
X_i=\Phi(X_{i-1},U_{i-1}),
\qquad
\widetilde X_i=\Phi(X_{i-1},\widetilde U_{i-1}),
\]
where $U_{i-1}$ and $\widetilde U_{i-1}$ are independent of
$\mathcal F_{i-1}$ and identically distributed with law $\nu$.
Hence
\[
\Law(X_i\mid\mathcal F_{i-1})
=
P(X_{i-1},\cdot)
=
\Law(\widetilde X_i\mid\mathcal F_{i-1}).
\]
\end{proof}

The next lemma shows that tangency is stable under arbitrary measurable
transformations.  This is crucial because the variance comparison in Section~\ref{sec:varcomp} is performed not on the states themselves but on transformed variables such as $f(X_i)$.

\begin{lemma}[Tangency preserved under measurable transformations]\label{lem:tangency-preserved}
Let $(d_i)_{i \geq 1}$ and $(e_i)_{i \geq 1}$ be sequences of random variables taking values in a measurable space $(S, \mathcal{S})$, and let $(\F_i)_{i \geq 0}$ be a filtration. Suppose $(d_i)$ and $(e_i)$ are $\F_{i-1}$-tangent. Then for any measurable function $f: S \to \R$:
\begin{enumerate}[label=(\alph*)]
\item The sequences $(f(d_i))$ and $(f(e_i))$ are $\F_{i-1}$-tangent.
\item If $(e_i)$ is conditionally independent given a $\sigma$-algebra $\G$, then $(f(e_i))$ is also conditionally independent given $\G$.
\end{enumerate}
\end{lemma}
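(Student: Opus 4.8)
The plan is to reduce both parts to a single elementary fact about images of conditional laws under measurable maps. For any $S$-valued random variable $Z$, any sub-$\sigma$-algebra $\mathcal H$, and any bounded measurable $h:\R\to\R$, the composition $h\circ f$ is again bounded and measurable, so $\E[h(f(Z))\mid\mathcal H]=\E[(h\circ f)(Z)\mid\mathcal H]$; equivalently, the conditional law of $f(Z)$ given $\mathcal H$ is the pushforward $f_*\Law(Z\mid\mathcal H)$. Both (a) and (b) then follow by feeding test functions of the form $h\circ f$ into the tangency and conditional-independence hypotheses. Throughout I would use the standard characterization that $\Law(d_i\mid\F_{i-1})=\Law(e_i\mid\F_{i-1})$ a.s.\ is equivalent to $\E[g(d_i)\mid\F_{i-1}]=\E[g(e_i)\mid\F_{i-1}]$ a.s.\ for every bounded measurable $g:S\to\R$.

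For part (a), I would fix an arbitrary bounded measurable $h:\R\to\R$ and apply the tangency hypothesis with the admissible test function $g=h\circ f$. Since $g$ is bounded measurable on $S$, tangency of $(d_i)$ and $(e_i)$ gives $\E[h(f(d_i))\mid\F_{i-1}]=\E[h(f(e_i))\mid\F_{i-1}]$ a.s. As $h$ ranges over all bounded measurable functions, this is exactly the assertion that $f(d_i)$ and $f(e_i)$ share the same conditional law given $\F_{i-1}$, i.e.\ $\Law(f(d_i)\mid\F_{i-1})=\Law(f(e_i)\mid\F_{i-1})$ a.s., which is $\F_{i-1}$-tangency of $(f(d_i))$ and $(f(e_i))$.

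For part (b), I would write conditional independence of $(e_i)$ given $\G$ in its factorized form: for all finite index sets $i_1<\cdots<i_k$ and all bounded measurable $g_1,\dots,g_k:S\to\R$, one has $\E\bigl[\prod_{j=1}^k g_j(e_{i_j})\mid\G\bigr]=\prod_{j=1}^k\E[g_j(e_{i_j})\mid\G]$. Substituting $g_j=h_j\circ f$ for arbitrary bounded measurable $h_j:\R\to\R$, and using $g_j(e_{i_j})=h_j(f(e_{i_j}))$, yields the same factorization for the transformed sequence, establishing that $(f(e_i))$ is conditionally independent given $\G$.

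The only delicate point is the bookkeeping of exceptional null sets in the a.s.\ statements, since the definition of tangency is phrased through conditional \emph{laws}. I would dispose of this either by taking the conditional-expectation characterization above as the working definition of the conditional law, which reduces each part to a one-line substitution, or—if regular conditional distributions are preferred—by invoking a countable determining class of bounded continuous test functions on $S$ (available when $(S,\mathcal S)$ is standard Borel), so that equality of conditional laws follows from countably many conditional-expectation identities whose exceptional sets have null union. Everything else in the argument is mechanical, so I expect no genuine obstacle beyond this measure-theoretic hygiene.
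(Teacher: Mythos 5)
Your proposal is correct and is essentially the paper's own proof: part (a) is the pushforward-of-the-conditional-law argument (the paper runs it with regular conditional distributions and a $\pi$--$\lambda$ argument over the rational half-lines $(-\infty,q]$, which is exactly the ``countable determining class'' you invoke for null-set hygiene), and part (b) is the identical substitution $g_k = h_k\circ f$ into the factorized form of conditional independence. One small correction: the countable determining class is needed on $\R$ (the codomain of $f$), where it exists unconditionally, not on $S$, so no standard Borel assumption on $(S,\mathcal{S})$ is required.
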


\begin{proof}

\noindent\textbf{Part (a).}
Fix $i \ge 1$. Let $\kappa_d(\omega,\cdot)$ and $\kappa_e(\omega,\cdot)$ denote
regular conditional distributions of $d_i$ and $e_i$ given $\mathcal F_{i-1}$.
By $\mathcal F_{i-1}$--tangency, there exists a null set $N_0$ such that for all
$\omega\notin N_0$,
\[
\kappa_d(\omega,A)=\kappa_e(\omega,A)
\qquad\text{for all }A\in\mathcal S.
\]

The conditional distribution of $f(d_i)$ given $\mathcal F_{i-1}$ is the
pushforward measure $\kappa_d(\omega,\cdot)\circ f^{-1}$, and similarly for
$f(e_i)$. To establish tangency, it therefore suffices to show that these
pushforward measures agree almost surely.

Let
\[
\mathcal C := \{(-\infty,q]: q\in\mathbb Q\},
\]
a countable $\pi$--system generating $\mathcal B(\mathbb R)$. For any
$B\in\mathcal C$ and $\omega\notin N_0$, measurability of $f$ implies
$f^{-1}(B)\in\mathcal S$, and hence
\begin{equation}\label{eq:tangency}
(\kappa_d(\omega,\cdot)\circ f^{-1})(B)
= \kappa_d(\omega,f^{-1}(B))
= \kappa_e(\omega,f^{-1}(B))
= (\kappa_e(\omega,\cdot)\circ f^{-1})(B).
\end{equation}

For fixed $\omega\notin N_0$, define
\[
\mathcal D_\omega
:= \bigl\{B\in\mathcal B(\mathbb R):
(\kappa_d(\omega,\cdot)\circ f^{-1})(B)
= (\kappa_e(\omega,\cdot)\circ f^{-1})(B)\bigr\}.
\]
Then:
\begin{itemize}
\item $\mathcal D_\omega$ is a $\lambda$-system (closed under complements and countable disjoint unions);
\item $\mathcal C\subseteq\mathcal D_\omega$ by \eqref{eq:tangency};
\item $\mathcal C$ is a $\pi$--system generating $\mathcal B(\mathbb R)$.
\end{itemize}
By the $\pi$--$\lambda$ theorem,
\[
\mathcal D_\omega=\mathcal B(\mathbb R).
\]
Thus, for all $\omega\notin N_0$,
\[
\Law(f(d_i)\mid\mathcal F_{i-1})(\omega)
=
\Law(f(e_i)\mid\mathcal F_{i-1})(\omega),
\]
and hence $(f(d_i))$ and $(f(e_i))$ are $\mathcal F_{i-1}$--tangent.

\medskip
\textbf{Part (b).}
Suppose $(e_i)$ is conditionally independent given $\mathcal G$. By
definition, for all $n\ge1$ and all bounded measurable functions
$g_1,\dots,g_n:S\to\mathbb R$,
\[
\mathbb E\!\left[\prod_{k=1}^n g_k(e_k)\,\middle|\,\mathcal G\right]
=
\prod_{k=1}^n \mathbb E[g_k(e_k)\mid\mathcal G]
\quad\text{a.s.}
\]

Let $h_1,\dots,h_n:\mathbb R\to\mathbb R$ be bounded measurable functions and
define $g_k:=h_k\circ f$, which are bounded measurable functions on $S$.
Applying the above identity yields
\[
\mathbb E\!\left[\prod_{k=1}^n h_k(f(e_k))\,\middle|\,\mathcal G\right]
=
\prod_{k=1}^n \mathbb E[h_k(f(e_k))\mid\mathcal G]
\quad\text{a.s.}
\]
Since this holds for all $n\ge1$ and all bounded measurable
$h_1,\dots,h_n$, it follows that $(f(e_i))$ is conditionally independent
given $\mathcal G$.

\end{proof}
\section{Variance Comparison} \label{sec:varcomp}

For $f \in L^2(\mu)$, define the centered sequences
\[
d_i = f(X_i) - \mu(f), 
\qquad 
e_i = f(\widetilde X_i) - \mu(f) \;.
\]
Assume the following limits exist and are finite:
\[
\sigma_f^2 
= \lim_{N \to \infty} N\,\Var\!\left(\frac{1}{N}\sum_{i=1}^N d_i\right),
\qquad
\widetilde\sigma_f^{\,2} 
= \lim_{N \to \infty} N\,\Var\!\left(\frac{1}{N}\sum_{i=1}^N e_i\right).
\]

\begin{theorem}[Finite-sample and asymptotic variance domination]
\label{thm:variance}
Let $(X_i)_{i\ge 0}$ be a Markov chain of the form~\eqref{eq:chain},
and let $(\widetilde X_i)_{i\ge 0}$ be its tangent-decoupled sequence
\eqref{eq:decoupled}. Then for every $f \in L^2(\mu)$ and every $N \ge 1$,
\begin{equation}\label{eq:finite-var-dom}
\Var\!\left(\sum_{i=1}^N d_i\right)
\;\le\; 
2\,\Var\!\left(\sum_{i=1}^N e_i\right).
\end{equation}
Consequently, $
\sigma_f^2 \;\le\; 2\,\widetilde\sigma_f^{\,2}$.
\end{theorem}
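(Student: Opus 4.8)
The plan is to reduce the statement to the \emph{sharp $L^2$ tangent decoupling inequality}, whose active ingredients — tangency of $(d_i)$ and $(e_i)$ together with the conditional independence of the decoupled sequence — have already been assembled in Lemmas~\ref{lem:tangency} and \ref{lem:tangency-preserved}. Applying Lemma~\ref{lem:tangency-preserved}(a) to the measurable map $x\mapsto f(x)-\mu(f)$ shows that the centered sequences $(d_i)$ and $(e_i)$ are again $\mathcal F_{i-1}$-tangent, and part (b) shows that $(e_i)$ is conditionally independent given $\mathcal G=\sigma(X_j:j\ge0)$. Square-integrability of every term is guaranteed by the stationary marginals of Lemma~\ref{lem:marginals}, so all second moments below are finite. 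I would then either cite the inequality of de la Pe\~na, Yao, and Alemayehu directly or, preferably, give the short self-contained argument below, since it exhibits exactly where the constant $2$ enters and why it is sharp.

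The structural device is a decomposition of both sums around their \emph{common conditional mean}. Set $m_i:=\E[d_i\mid\mathcal F_{i-1}]=\E[e_i\mid\mathcal F_{i-1}]$, where the second equality is precisely tangency. A one-line computation using that $U_{i-1}$ and $\widetilde U_{i-1}$ are independent of $\mathcal F_{i-1}$ identifies $m_i=(Pf)(X_{i-1})-\mu(f)$; in particular $m_i$ is $\mathcal G$-measurable, and $\E[e_i\mid\mathcal G]=m_i$ as well. Writing $\bar d_i:=d_i-m_i$ and $\bar e_i:=e_i-m_i$, both $(\bar d_i)$ and $(\bar e_i)$ are martingale differences for the filtration $(\mathcal F_i)$, so their partial sums are orthogonal across time. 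Tangency forces the conditional variances to agree, $\Var(d_i\mid\mathcal F_{i-1})=\Var(e_i\mid\mathcal F_{i-1})$, hence $\E[\bar d_i^2]=\E[\bar e_i^2]$ for every $i$, and therefore the two martingale parts have \emph{identical} variance, $\Var(\sum_{i\le N}\bar d_i)=\sum_{i\le N}\E[\bar e_i^2]$.

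The two sums then differ only through the way the shared predictable drift $\sum_i m_i$ interacts with the martingale parts. On the decoupled side I would invoke conditional independence given $\mathcal G$: since $\sum_i m_i$ is $\mathcal G$-measurable and the $e_i$ are conditionally independent with $\E[e_i\mid\mathcal G]=m_i$, the law of total variance gives the clean identity $\Var(\sum_{i\le N} e_i)=\Var(\sum_{i\le N} m_i)+\sum_{i\le N}\E[\bar e_i^2]$, with no cross terms. On the original side no such orthogonality is available, so I would use only the elementary bound $\Var(A+B)\le 2\Var(A)+2\Var(B)$ with $A=\sum\bar d_i$ and $B=\sum m_i$; combined with $\Var(\sum\bar d_i)=\sum\E[\bar e_i^2]$ this yields exactly $\Var(\sum_{i\le N} d_i)\le 2\sum_{i\le N}\E[\bar e_i^2]+2\Var(\sum_{i\le N} m_i)=2\,\Var(\sum_{i\le N} e_i)$. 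The passage to $\sigma_f^2\le 2\widetilde\sigma_f^{\,2}$ is then immediate upon dividing by $N$ and letting $N\to\infty$.

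The main obstacle — and the only place the factor $2$ is lost — is precisely the treatment of the drift term $\sum_i m_i$ on the original side: the martingale and predictable components of $\sum d_i$ may be arbitrarily correlated, so their variances cannot simply be added as in the decoupled case. The inequality $\Var(A+B)\le 2(\Var A+\Var B)$ is the crude but sharp way to absorb this unknown covariance, and it is tight exactly when $A$ and $B$ are perfectly correlated with equal variances; this is what the AR(1) example with $a=\tfrac12$ realizes, confirming that the constant $2$ cannot be improved in this generality. A secondary point to verify carefully is that $m_i$ is genuinely $\mathcal G$-measurable, so that it factors out of the conditional variance of the decoupled sum; this rests on the auxiliary variables $\widetilde U_{i-1}$ being independent of the \emph{entire} backbone $\mathcal G$, not merely of $\mathcal F_{i-1}$.
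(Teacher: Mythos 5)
Your proof is correct, and it takes a genuinely different route from the paper's. The paper, after establishing exactly the two preconditions you assemble (tangency of $(d_i)$ and $(e_i)$ via Lemmas~\ref{lem:tangency} and~\ref{lem:tangency-preserved}, and conditional independence of $(e_i)$ given $\G$), simply invokes the sharp $L^2$ tangent decoupling inequality of de la Pe\~na, Yao, and Alemayehu \cite{delapena-yao-alemayehu} as a black box, notes that tangency equalizes means so the second-moment bound transfers to variances, and divides by $N$. You instead reprove that inequality in this setting: you decompose both sums around the common predictable part $m_i=(Pf)(X_{i-1})-\mu(f)$, use martingale orthogonality together with the tangency-forced equality of conditional variances to get $\Var\bigl(\sum_{i\le N}\bar d_i\bigr)=\sum_{i\le N}\E[\bar e_i^2]$, obtain the exact identity $\Var\bigl(\sum_{i\le N}e_i\bigr)=\Var\bigl(\sum_{i\le N}m_i\bigr)+\sum_{i\le N}\E[\bar e_i^2]$ on the decoupled side from the law of total variance plus conditional independence, and absorb the unknown martingale--drift covariance on the original side with the elementary bound $\Var(A+B)\le 2\Var(A)+2\Var(B)$. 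All steps check out, including the two points you flag: $\E[e_i\mid\G]=\E[e_i\mid\F_{i-1}]=m_i$ holds because $(\tU_i)$ is independent of $(X_0,(U_i)_{i\ge0})$ and hence of all of $\G$, not just of the past; and $f\in L^2(\mu)$ with the stationary marginals of Lemma~\ref{lem:marginals} supplies the needed square-integrability (with $m_i\in L^2$ by conditional Jensen). As for what each approach buys: the paper's proof is shorter and modular, but it rests on an external result listed only as submitted; yours is self-contained, makes transparent exactly where the factor $2$ enters (the crude but sharp treatment of the martingale/drift covariance, tight precisely when the two parts are perfectly correlated with equal variances), and thereby explains structurally why the AR(1) chain with $a=\tfrac12$ attains equality. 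In effect, you have given a proof of the cited decoupling inequality in the Markov-chain setting, which makes the theorem independent of the unpublished reference.
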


The proof is an application of the sharp $L^2$ tangent decoupling inequality of de la Peña, Yao, and Alemayehu (2025) \cite{delapena-yao-alemayehu}, after verifying that $(d_i)$ and $(e_i)$ satisfy the required tangency and conditional independence conditions.

\begin{remark}[Sharpness of the constant]
The factor $2$ in Theorem~\ref{thm:variance} cannot be improved in
general.  Section~\ref{sec:ar1} shows that for the stationary AR(1) chain
\[
X_{i+1} = a X_i + \xi_i, \qquad |a|<1,
\]
with observable $f(x)=x$, the ratio $\sigma_f^2 / \widetilde\sigma_f^{\,2}$ is equal to $2$ when $a=\tfrac12$.  Thus equality is attained, and the constant
in the variance domination inequality is optimal.
\end{remark}

\begin{proof}
By Lemmas~\ref{lem:tangency} and~\ref{lem:tangency-preserved},
the sequences $(d_i)$ and $(e_i)$ are $\mathcal F_{i-1}$-tangent,
and $(e_i)$ is conditionally independent given
$\mathcal G = \sigma(X_i : i \ge 0)$.

By the $L^2$ decoupling inequality of de la Pe\~na, Yao and Alemayehu (2025) \cite{delapena-yao-alemayehu},
for all $N \ge 1$ we have
\[
\mathbb{E}\!\left[\Bigl(\sum_{i=1}^N d_i\Bigr)^2\right]
\;\le\; 2\,
\mathbb{E}\!\left[\Bigl(\sum_{i=1}^N e_i\Bigr)^2\right].
\]
Moreover, tangency implies
$\mathbb{E}[d_i] = \mathbb{E}[e_i]$ for each $i$ (by equality of conditional expectations under tangency), hence
\[
\Var\!\left(\sum_{i=1}^N d_i\right)
= \mathbb{E}\!\left[\Bigl(\sum_{i=1}^N d_i\Bigr)^2\right]
      - \Bigl(\mathbb{E}\Bigl[\sum_{i=1}^N d_i\Bigr]\Bigr)^2,
\]
and similarly for $(e_i)$. Since the expectations of the sums coincide,
the same inequality holds for the variances:
\[
\Var\!\left(\sum_{i=1}^N d_i\right)
\;\le\; 2\,\Var\!\left(\sum_{i=1}^N e_i\right),
\]
which is \eqref{eq:finite-var-dom}.

Dividing both sides by $N$ yields
\[
 \Var\!\left(\frac{1}{\sqrt{N}}\sum_{i=1}^N d_i\right)
\;\le\;
2\, \Var\!\left(\frac{1}{\sqrt{N}}\sum_{i=1}^N e_i\right).
\]
Letting $N\to\infty$ gives $\sigma_f^2 \le 2\widetilde\sigma_f^{\,2}$, as required.
\end{proof}


\section{Consequences for confidence bounds} \label{sec:CI}

This section shows how the variance domination in Theorem~\ref{thm:variance}
can be used to construct practical and conservative confidence bounds for
$\mu(f)$. 

For $N \ge 1$ define the empirical means
\[
\bar f_N := \frac{1}{N}\sum_{i=1}^N f(X_i),
\qquad
\bar f_N^{\,\widetilde X} := \frac{1}{N}\sum_{i=1}^N f(\widetilde X_i),
\]
so that
\[
\sum_{i=1}^N d_i
  = N\bigl(\bar f_N - \mu(f)\bigr),
\qquad
\sum_{i=1}^N e_i
  = N\bigl(\bar f_N^{\,\widetilde X} - \mu(f)\bigr),
\]
where $d_i := f(X_i)-\mu(f)$ and $e_i := f(\widetilde X_i)-\mu(f)$.
The finite–sample variance comparison of 
Theorem~\ref{thm:variance} gives
\begin{equation}\label{eq:var-mean-finite}
\Var(\bar f_N)
= \frac{1}{N^2}\Var\!\Bigl(\sum_{i=1}^N d_i\Bigr)
\;\le\;
\frac{2}{N^2}\Var\!\Bigl(\sum_{i=1}^N e_i\Bigr)
= 2\,\Var(\bar f_N^{\,\widetilde X}).
\end{equation}

To make use of this bound, we require an estimator of the decoupled long–run
variance.  This can be obtained using standard long–run variance estimators,
such as the Bartlett or Newey–West HAC estimators
\cite{Bartlett1946,NeweyWest1987}.  These estimators are widely used for
dependent data and have also been employed in the MCMC context; see, for
example, \cite{FlegalJones2010,Atchade2011,FlegalJones2011} for discussion and
theoretical guarantees. In addition, since many MCMC methods used in probabilistic programming languages (including NUTS) are reversible, it is natural to employ Geyer’s initial positive sequence (IPS) estimator \cite{geyer1992}. The IPS estimator exploits a structural positivity property of the autocorrelation function that holds for reversible Markov chains, yielding a stable and fully automatic truncation rule for estimating the long–run variance. As shown in Section~\ref{sec:mcmc}, this positivity property is inherited by the tangent–decoupled sequence, making Geyer’s method particularly well suited for uncertainty quantification in the present setting.

\begin{assumption}[Consistent estimator of the decoupled variance]
\label{ass:decoupled-consistent}
There exists an estimator $\widetilde{\sigma}_{f,N}^2$ such that
\[
    \widetilde{\sigma}_{f,N}^2 \xrightarrow{P} \widetilde{\sigma}_f^{\,2}
    \qquad (N\to\infty),
\]
where $\widetilde{\sigma}_f^{\,2}$ is  the long-run (asymptotic) variance of $\bar f_N^{\,\widetilde X}$.
We additionally assume that $\widetilde{\sigma}_f^{\,2}>0$.
\end{assumption}

Assumption~\ref{ass:decoupled-consistent} is mild: it holds whenever
$(e_i)$ admits a long--run variance and satisfies a standard CLT (for
example, under geometric ergodicity of $(X_i)$ together with
$f \in L^{2}(\mu)$).

We now obtain a conservative confidence interval for $\mu(f)$.

\begin{corollary}[Confidence intervals under CLT]\label{cor:CI}
Suppose the assumptions of Theorem~\ref{thm:variance} and
Assumption~\ref{ass:decoupled-consistent} hold, and assume that
\[
\sqrt{N}\bigl(\bar f_N - \mu(f)\bigr)
\;\xrightarrow{d}\; N(0,\sigma_f^2)
\qquad (N\to\infty),
\]
where $\sigma_f^2 = \lim_{N\to\infty} N\,\Var(\bar f_N)$.
Then for any $\alpha\in(0,1)$ the interval
\[
\mathrm{CI}_N(f)
  := \bar f_N
     \pm
     z_{1-\alpha/2}\,
     \sqrt{\frac{2\,\widetilde\sigma_{f,N}^2}{N}}
\]
is an asymptotically conservative $(1-\alpha)$ confidence interval for
$\mu(f)$, i.e.
\[
\liminf_{N\to\infty}
\Prob\bigl(\,\mu(f) \in \mathrm{CI}_N(f)\,\bigr)
\;\ge\; 1-\alpha.
\]
\end{corollary}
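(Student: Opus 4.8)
The plan is to reduce the coverage statement to a studentization argument followed by a monotonicity comparison for Gaussian tails. First I would rewrite the coverage event in standardized form. Setting $T_N := \sqrt{N}\bigl(\bar f_N - \mu(f)\bigr)$, the definition of $\mathrm{CI}_N(f)$ gives, after multiplying through by $\sqrt{N}$,
\[
\Prob\bigl(\mu(f)\in\mathrm{CI}_N(f)\bigr)
= \Prob\!\left(|T_N|\le z_{1-\alpha/2}\sqrt{2\,\widetilde\sigma_{f,N}^2}\right)
= \Prob\!\left(\frac{|T_N|}{\sqrt{2\,\widetilde\sigma_{f,N}^2}}\le z_{1-\alpha/2}\right).
\]
Since $\widetilde\sigma_f^{\,2}>0$ by Assumption~\ref{ass:decoupled-consistent} and $\widetilde\sigma_{f,N}^2\xrightarrow{P}\widetilde\sigma_f^{\,2}$, the denominator is positive on an event of probability tending to one, so this rewriting is valid in the limit.

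Next I would identify the limiting law of the studentized statistic. By hypothesis $T_N\xrightarrow{d} N(0,\sigma_f^2)$, and by Assumption~\ref{ass:decoupled-consistent} together with the continuous mapping theorem $\sqrt{2\,\widetilde\sigma_{f,N}^2}\xrightarrow{P}\sqrt{2\,\widetilde\sigma_f^{\,2}}>0$. Slutsky's theorem (in its ratio form, applicable precisely because the limiting denominator is a nonzero constant) then yields
\[
\frac{T_N}{\sqrt{2\,\widetilde\sigma_{f,N}^2}}\;\xrightarrow{d}\;W,
\qquad
W\sim N\!\left(0,\ \tau^2\right),\quad \tau^2:=\frac{\sigma_f^2}{2\,\widetilde\sigma_f^{\,2}}.
\]
Because $W$ is Gaussian, the symmetric interval $\{\,|w|\le z_{1-\alpha/2}\,\}$ is a continuity set of its distribution, so the Portmanteau theorem gives \emph{exact} convergence of the coverage probability to $\Prob(|W|\le z_{1-\alpha/2})$; in particular the $\liminf$ coincides with this limit.

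Finally I would invoke the variance domination. Theorem~\ref{thm:variance} gives $\sigma_f^2\le 2\,\widetilde\sigma_f^{\,2}$, hence $\tau^2\le 1$. Writing $W\stackrel{d}{=}\tau Z$ with $Z\sim N(0,1)$ and recalling that $z_{1-\alpha/2}$ is defined by $\Prob(|Z|\le z_{1-\alpha/2})=1-\alpha$,
\[
\Prob\bigl(|W|\le z_{1-\alpha/2}\bigr)
= \Prob\!\left(|Z|\le \frac{z_{1-\alpha/2}}{\tau}\right)
\ge \Prob\bigl(|Z|\le z_{1-\alpha/2}\bigr)
= 1-\alpha,
\]
where the inequality uses $\tau\le 1$ so that $z_{1-\alpha/2}/\tau\ge z_{1-\alpha/2}$ (the degenerate case $\tau=0$ gives coverage one and is immediate). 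Combining with the exact limit from the previous step yields $\liminf_{N\to\infty}\Prob\bigl(\mu(f)\in\mathrm{CI}_N(f)\bigr)\ge 1-\alpha$.

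As for difficulty: every step here is standard, so there is no genuine obstacle—the argument is essentially Slutsky plus Portmanteau plus a one-line Gaussian comparison. The only points requiring care are bookkeeping rather than substance: ensuring the ratio is well-defined by using $\widetilde\sigma_f^{\,2}>0$, confirming that the target interval is a continuity set so that convergence in distribution upgrades to convergence of probabilities, and tracking that the inequality $\sigma_f^2\le 2\,\widetilde\sigma_f^{\,2}$ enters exactly through the monotonicity of symmetric centered-Gaussian probabilities in the variance. The conservativeness—rather than exact asymptotic coverage—is precisely the price paid for the factor $2$ in the variance domination, and equality in the AR(1) example of Section~\ref{sec:ar1} shows this slack cannot be removed in general.
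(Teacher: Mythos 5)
Your proposal is correct and follows essentially the same route as the paper's proof: standardize the deviation by $\sqrt{2\,\widetilde\sigma_{f,N}^2}$, apply the assumed CLT together with Assumption~\ref{ass:decoupled-consistent} and Slutsky's theorem to obtain a $N(0,\tau^2)$ limit with $\tau^2=\sigma_f^2/(2\,\widetilde\sigma_f^{\,2})\le 1$ by Theorem~\ref{thm:variance}, and conclude by monotonicity of symmetric centered-Gaussian probabilities in the variance. Your additional care (the Portmanteau continuity-set remark and the degenerate case $\tau=0$) slightly tightens steps the paper leaves implicit, but the argument is the same.
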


Before turning to the proof, it is useful to summarize the structure of the
argument.  
The variance comparison from Theorem~\ref{thm:variance} implies that the true
asymptotic variance $\sigma_f^2$ of the original empirical mean is no larger than twice that of the
tangent–decoupled sequence.  
When combined with the CLT for $\bar f_N$ and the consistency of the variance estimator
$\widetilde\sigma_{f,N}^2$, this leads to a standardized statistic whose
limiting variance does not exceed one.  
The resulting bound guarantees that the normal quantile yields conservative
long–run coverage.

\begin{proof}
By Theorem~\ref{thm:variance}, 
\[
\sigma_f^2 \;\le\; 2\,\widetilde\sigma_f^{\,2}.
\]
Let
\[
T_N := \frac{\sqrt{N}\,(\bar f_N - \mu(f))}{\sqrt{2\,\widetilde\sigma_{f,N}^2}}.
\]

We analyze the limiting distribution of $T_N$.
By the assumed CLT for $(X_i)$,
\[
\sqrt{N}\,(\bar f_N - \mu(f)) \xrightarrow{d} N(0,\sigma_f^2).
\]
By Assumption~\ref{ass:decoupled-consistent},
\[
\widetilde{\sigma}_{f,N}^2 \xrightarrow{P} \widetilde{\sigma}_f^{\,2},
\qquad
\text{so}
\qquad
\frac{1}{\sqrt{2\,\widetilde{\sigma}_{f,N}^2}}
\;\xrightarrow{P}\;
\frac{1}{\sqrt{2\,\widetilde{\sigma}_f^{\,2}}}.
\]
Since $\widetilde{\sigma}_{f,N}^2$ is positive with probability tending to~$1$,
Slutsky's theorem yields
\[
T_N
=
\frac{\sqrt{N}(\bar f_N - \mu(f))}
     {\sqrt{2\,\widetilde{\sigma}_{f,N}^2}}
\;\xrightarrow{d}\;
N\!\left(0,\tau^2\right),
\qquad
\tau^2 := \frac{\sigma_f^2}{2\,\widetilde\sigma_f^{\,2}}.
\]
By Theorem~\ref{thm:variance}, $\tau^2 \le 1$.

Fix $\alpha\in(0,1)$.  
Let $z_{1-\alpha/2}$ denote the $(1-\alpha/2)$ standard normal quantile.
For any $\tau^2 \le 1$, the normal distribution $N(0,\tau^2)$ is stochastically dominated by $N(0,1)$ in absolute value, since
\[
\Prob(|Z_\tau| \le z_{1-\alpha/2})
  = \Prob(|\tau Z| \le z_{1-\alpha/2})
  = \Prob(|Z| \le z_{1-\alpha/2}/\tau)
  \;\ge\; \Prob(|Z| \le z_{1-\alpha/2}),
\]
where $Z\sim N(0,1)$.
Therefore,
\[
\liminf_{N\to\infty}
\Prob\!\left(|T_N|\le z_{1-\alpha/2}\right)
\;\ge\;
1-\alpha.
\]

Finally, the event $\{|T_N|\le z_{1-\alpha/2}\}$ is precisely
\[
\mu(f)
\;\in\;
\bar f_N
\pm
z_{1-\alpha/2}
\sqrt{\frac{2\,\widetilde\sigma_{f,N}^2}{N}},
\]
so
\[
\liminf_{N\to\infty}
\Prob\Bigl(
\mu(f)\in \mathrm{CI}_N(f)
\Bigr)
\;\ge\;
1-\alpha,
\]
as claimed.
\end{proof}

\begin{remark}
    Although we focus here on CLT-based confidence intervals, the variance domination result of Theorem~\ref{thm:variance} can also be combined with Chebyshev-type arguments to obtain conservative bounds without assuming asymptotic normality \cite{Rosenthal2017}. We omit these extensions, which follow standard arguments, to keep the presentation focused.
\end{remark}

\section{Case Study: The AR(1) Process}
\label{sec:ar1}

We illustrate the tangent–decoupling construction in the setting of the
stationary Gaussian autoregressive process
\begin{equation}
    X_{i+1} = a X_i + \xi_i,
    \qquad |a|<1,
    \label{eq:ar1-chain}
\end{equation}
where $(\xi_i)_{i\ge0}$ are i.i.d.\ $N(0,1)$ and independent of $X_0$.
In stationarity,
\[
    X_i \sim \mathcal N(0,\sigma_0^2),
    \qquad
    \sigma_0^2 = \frac{1}{1-a^2}.
\]
The update map is $\Phi(x,u)=ax+u$, so the auxiliary variables are the
innovations $U_i=\xi_i$.

\subsection{Tangent--decoupled sequence}
Let $(\widetilde U_i)=(\widetilde\xi_i)$ be an independent i.i.d.\ copy of
$(U_i)$.
Following Definition~\ref{def:tangent}, the tangent–decoupled chain
$(\widetilde X_i)$ associated with \eqref{eq:ar1-chain} is defined recursively by
\[
    \widetilde X_{i+1} := \Phi(X_i,\widetilde U_i)
    = a X_i + \widetilde\xi_i,
    \qquad i\ge0.
\]
Thus $\widetilde X_{i+1}$ applies the same update map to the same state $X_i$
but with an independent innovation.
Both chains have the same marginals:
\[
    \widetilde X_i \stackrel{d}{=} X_i,
    \qquad\text{for all } i.
\]

\subsection{TAVCs for linear observables}

Consider the observable $f(x)=x$.
Write
\[
    \sigma_f^2 = \lim_{N\to\infty}
        N\,\Var_\mu\!\Bigl(\frac1N\sum_{i=1}^N f(X_i)\Bigr),
    \qquad
    \widetilde\sigma_f^2 = \lim_{N\to\infty}
        N\,\Var_\mu\!\Bigl(\frac1N\sum_{i=1}^N f(\widetilde X_i)\Bigr).
\]

The original chain satisfies
\[
    \Cov(X_i,X_{i+k})=\sigma_0^2 a^k.
\]
Thus
\[
    \sigma_f^2
    = \sigma_0^2 + 2\sum_{k=1}^\infty \sigma_0^2 a^k
    = \frac{1}{(1-a)^2}.
\]

For the tangent–decoupled chain,
\[
    \widetilde X_{i+1} = a X_i + \widetilde\xi_i,
\]
and since $\widetilde\xi_i$ is independent of all other terms,
\[
    \Cov(\widetilde X_i,\widetilde X_{i+k})
    = \Cov(a X_{i-1},a X_{i+k-1})
    = \sigma_0^2 a^{k+2},
    \qquad k\ge1.
\]
Hence
\[
    \widetilde\sigma_f^2
    = \sigma_0^2 + 2\sum_{k=1}^\infty \sigma_0^2 a^{k+2}
    = \sigma_0^2\Bigl(1+\frac{2a^3}{1-a}\Bigr).
\]
Using $\sigma_0^2 = (1-a^2)^{-1}$ and simplifying gives
\[
    \widetilde\sigma_f^2
    = \frac{1-a+2a^3}{(1-a)^2(1+a)}.
\]

A direct computation shows
\[
    \frac{\widetilde\sigma_f^2}{\sigma_f^2}
    = 2\Bigl(a-\tfrac12\Bigr)^2 + \tfrac12
    \;\ge\; \tfrac12,
\]
which is consistent with the variance domination predicted by
Theorem~\ref{thm:variance}:
\[
    \sigma_f^2 \;\le\; 2\,\widetilde\sigma_f^2.
\]

The AR(1) structure allows us to go further: the tangent-decoupled chain
dominates not only integrated variances but also the entire Toeplitz covariance matrix in
Loewner order:

\begin{proposition}[Toeplitz covariance domination]\label{prop:ar1-covariance-domination}
Let $\Sigma_d$ and $\widetilde\Sigma_d$ denote the covariance matrices of 
$(X_0,\dots,X_{d-1})$ and $(\widetilde X_0,\dots,\widetilde X_{d-1})$, 
where $(X_i)$ is the stationary AR(1) chain and $(\widetilde X_i)$ is its 
tangent--decoupled version with $\widetilde X_0=X_0$.  
Then for every $d\ge1$,
\[
    \Sigma_d \;\preceq\; 2\,\widetilde\Sigma_d ,
\]
i.e.\ the difference $2\widetilde\Sigma_d - \Sigma_d$ is positive semidefinite.
\end{proposition}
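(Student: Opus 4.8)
The plan is to reduce the Loewner inequality $\Sigma_d \preceq 2\widetilde\Sigma_d$ to a scalar variance statement. Since $v^\top \Sigma_d v = \Var(S)$ and $v^\top \widetilde\Sigma_d v = \Var(\widetilde S)$ for the linear combinations $S := \sum_{i=0}^{d-1} v_i X_i$ and $\widetilde S := \sum_{i=0}^{d-1} v_i \widetilde X_i$, it suffices to prove $\Var(S) \le 2\,\Var(\widetilde S)$ for every $v\in\R^d$; positive semidefiniteness of $2\widetilde\Sigma_d-\Sigma_d$ then follows because $v$ is arbitrary.

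First I would write both chains explicitly in terms of the mutually independent Gaussians $X_0,\xi_0,\xi_1,\dots$ and $\widetilde\xi_0,\widetilde\xi_1,\dots$. Unrolling the recursion gives $X_i = a^i X_0 + \sum_{k=0}^{i-1} a^{i-1-k}\xi_k$, and because $\widetilde X_i = aX_{i-1}+\widetilde\xi_{i-1}$ only replaces the final innovation, $\widetilde X_i = a^i X_0 + \sum_{k=0}^{i-2} a^{i-1-k}\xi_k + \widetilde\xi_{i-1}$ for $i\ge1$, with $\widetilde X_0 = X_0$. The structural heart of the argument is that $X_i$ and $\widetilde X_i$ share every innovation except that $\xi_{i-1}$ is swapped for the fresh copy $\widetilde\xi_{i-1}$. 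Collecting coefficients, both $S$ and $\widetilde S$ carry the same coefficient $\alpha := \sum_{i=0}^{d-1} v_i a^i$ on $X_0$. Writing $\beta_k := \sum_{i=k+1}^{d-1} v_i a^{i-1-k}$ for the coefficient of $\xi_k$ in $S$ and $\gamma_k := \sum_{i=k+2}^{d-1} v_i a^{i-1-k}$ for its coefficient in $\widetilde S$ (with the convention $\gamma_{d-2}=0$), isolating the $i=k+1$ term yields the crucial identity $\beta_k = v_{k+1} + \gamma_k$, while each fresh $\widetilde\xi_j$ enters $\widetilde S$ with coefficient $v_{j+1}$.

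By independence of all the Gaussians, $\Var(S) = \alpha^2\sigma_0^2 + \sum_k \beta_k^2$ and $\Var(\widetilde S) = \alpha^2\sigma_0^2 + \sum_k \gamma_k^2 + \sum_k v_{k+1}^2$. Substituting $\beta_k = v_{k+1}+\gamma_k$ and simplifying $2\Var(\widetilde S)-\Var(S)$, the cross terms assemble into a perfect square, giving the clean identity $2\Var(\widetilde S)-\Var(S) = \alpha^2\sigma_0^2 + \sum_k (\gamma_k - v_{k+1})^2 \ge 0$, which is exactly $v^\top(2\widetilde\Sigma_d-\Sigma_d)v \ge 0$. I expect the main obstacle to be purely the bookkeeping of the coefficient sequences and their index shifts, especially handling the boundary index $k=d-2$ (where $\gamma_{d-2}$ is an empty sum) so that $\beta_k = v_{k+1}+\gamma_k$ holds uniformly; once that is in place the conclusion is a sum-of-squares manipulation.

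As an alternative one could bypass the explicit computation and invoke the general theory: the weighted sequences $d_i := v_i X_i$ and $e_i := v_i \widetilde X_i$ remain $\F_{i-1}$-tangent and $(e_i)$ remains conditionally independent given $\G$ (the deterministic scalars $v_i$ do not disturb Lemmas~\ref{lem:tangency} and~\ref{lem:tangency-preserved}), so the $L^2$ tangent decoupling inequality directly yields $\Var(S)\le 2\,\Var(\widetilde S)$ for every $v$. I would nonetheless favor the direct argument above, since it is self-contained within the AR(1) setting and exhibits the exact gap $\alpha^2\sigma_0^2 + \sum_k(\gamma_k-v_{k+1})^2$ in the domination.
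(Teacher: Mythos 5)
Your proof is correct, and it takes a genuinely different route from the paper's. The paper argues spectrally: it reads off the autocovariances $\gamma_k=\sigma_0^2a^{|k|}$ and $\widetilde\gamma_k=\sigma_0^2a^{|k|+2}$ ($k\neq0$), forms the Toeplitz symbols $f(\theta)=(1+a^2-2a\cos\theta)^{-1}$ and $\widetilde f(\theta)=a^2f(\theta)+1$, proves the pointwise bound $2\widetilde f(\theta)-f(\theta)\ge0$ by a two-case analysis on the sign of $2a^2-1$, and transfers this to the Loewner order through the quadratic-form representation $v^\top\Sigma_d v=\frac{1}{2\pi}\int_{-\pi}^{\pi}|p_v(e^{i\theta})|^2 f(\theta)\,d\theta$. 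You instead unroll both recursions into the shared independent innovations, observe that $S$ and $\widetilde S$ differ only through the swap of each last innovation (encoded in $\beta_k=v_{k+1}+\gamma_k$), and obtain the exact identity $2\Var(\widetilde S)-\Var(S)=\alpha^2\sigma_0^2+\sum_{k=0}^{d-2}(\gamma_k-v_{k+1})^2\ge0$; I verified the bookkeeping, including the boundary case $\gamma_{d-2}=0$, and the key algebraic step $2\gamma^2+2v^2-(\gamma+v)^2=(\gamma-v)^2$ is sound. Each approach buys something: the spectral route makes the sharpness of the constant transparent (its second case yields the minimum $4(a-\tfrac12)^2/(1-a)^2$, vanishing exactly at $a=\tfrac12$), while your route is elementary, exhibits an exact finite-$d$ formula for the gap, and---notably---works with the covariance matrix the proposition actually names. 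Indeed, with $\widetilde X_0=X_0$ one has $\Cov(\widetilde X_0,\widetilde X_k)=a\,\Cov(X_0,X_{k-1})=\sigma_0^2a^{k}$ rather than $\sigma_0^2a^{k+2}$, so the true $\widetilde\Sigma_d$ is not Toeplitz in its first row and column; the paper's proof tacitly replaces it by the Toeplitz matrix built from the interior lags, whereas your innovation representation handles the actual matrix and shows the proposition holds for it. Your fallback argument via weighted tangency and the general $L^2$ decoupling inequality is also viable (deterministic weights preserve tangency and conditional independence, and including the $i=0$ term with $d_0=e_0$ is harmless since $\G$-measurable variables are conditionally degenerate), but the direct computation is the more informative of your two routes.
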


\begin{proof}
From the covariance formulas computed above,
\[
    \gamma_k = \Cov(X_0,X_k)=\sigma_0^2 a^{|k|},
    \qquad
    \widetilde\gamma_k = \Cov(\widetilde X_0,\widetilde X_k)
    =
    \begin{cases}
        \sigma_0^2, & k=0,\\[2pt]
        \sigma_0^2 a^{|k|+2}, & k\neq 0,
    \end{cases}
\]
with $\widetilde X_0=X_0$ ensuring $\widetilde\gamma_0=\gamma_0$.  Their Toeplitz power spectral densities \cite{GrenanderSzego1984} are
\[
    f(\theta)=\frac{1}{1+a^2-2a\cos\theta},
    \qquad
    \widetilde f(\theta)=a^2 f(\theta)+1.
\]

To verify that $2\widetilde f(\theta)\ge f(\theta)$ for all $\theta$, observe that
\begin{equation} \label{eq:rhs_2tfmf}
     2\widetilde f(\theta)-f(\theta)
    = (2a^2-1)\,f(\theta) + 2.
\end{equation}
Since 
\[
    f(\theta)\in\Bigl[\frac{1}{(1+a)^2},\,\frac{1}{(1-a)^2}\Bigr],
\]
the minimum of the right-hand side of \eqref{eq:rhs_2tfmf} depends on the sign of $2a^2-1$.

\smallskip
\emph{Case 1: $2a^2-1 \ge 0$.}
In this case the expression is increasing in $f(\theta)$, so the minimum occurs at
$f_{\min} = (1+a)^{-2}$. Thus
\[
    \min_\theta \bigl(2\widetilde f(\theta)-f(\theta)\bigr)
    = (2a^2-1)\frac{1}{(1+a)^2} + 2 \;\ge\; 0.
\]

\smallskip
\emph{Case 2: $2a^2-1 < 0$.}
Now the expression is decreasing in $f(\theta)$, so the minimum occurs at
$f_{\max} = (1-a)^{-2}$. Hence
\[
    \min_\theta \bigl(2\widetilde f(\theta)-f(\theta)\bigr)
    = (2a^2-1)\frac{1}{(1-a)^2} + 2
    = \frac{4(a-\tfrac12)^2}{(1-a)^2} \;\ge\; 0.
\]

\smallskip

In both cases we conclude that $2\widetilde f(\theta)-f(\theta)\ge 0$ for all
$\theta\in[-\pi,\pi]$, and therefore $f(\theta)\le 2\widetilde f(\theta)$ as
claimed.

For Toeplitz covariance matrices,
\[
    v^\top \Sigma_d v
    = \frac{1}{2\pi}\int_{-\pi}^{\pi}
        |p_v(e^{i\theta})|^2 f(\theta)\,d\theta,
\]
and similarly for $\widetilde\Sigma_d$ with $\widetilde f$.  
The pointwise inequality $f\le 2\widetilde f$ therefore implies
\[
    v^\top \Sigma_d v
    \;\le\; 2\,v^\top \widetilde\Sigma_d v
    \qquad\text{for all } v\in\mathbb{R}^d,
\]
which is exactly the Loewner-order inequality
\(\Sigma_d \preceq 2\widetilde\Sigma_d\).
\end{proof}

\subsection{Quadratic and discontinuous observables}

A similar analysis can be carried out for nonlinear $f$.  
For example, for $f(x)=x^2$ one uses the Gaussian identity
\[
    \Cov(X_i^2,X_{i+k}^2)=2\,\Cov(X_i,X_{i+k})^2,
\]
yielding
\[
    \sigma_{x^2}^2
    =2\sigma_0^4\Bigl(1+\frac{2a^2}{1-a^2}\Bigr)
    =\frac{2(1+a^2)}{(1-a^2)^3},
    \qquad
    \widetilde\sigma_{x^2}^2
    =2\sigma_0^4\Bigl(1+\frac{2a^6}{1-a^2}\Bigr)
    =\frac{2(1-a^2+2a^6)}{(1-a^2)^3}.
\]
and again $\sigma_{x^2}^2 \le 2\widetilde\sigma_{x^2}^2$.

For a discontinuous observable such as
\[
    f(x)=\mathbf 1_{\{x>c\}},
\]
the covariance can be written in terms of bivariate Gaussian tail
probabilities:
\[
    \Cov\bigl(f(X_i),f(X_{i+k})\bigr)
    = \Phi_2(c,c; a^k)-\Phi(c)^2,
\]
where $\Phi$ is the standard normal CDF and
$\Phi_2(\cdot,\cdot;\rho)$ denotes the bivariate standard normal CDF
with correlation $\rho$.
For the tangent–decoupled chain this becomes
\[
    \Phi_2(c,c; a^{k+2})-\Phi(c)^2,
\]
again reducing correlations and preserving the inequality of
Theorem~\ref{thm:variance}.

\subsection{Empirical illustration}

Define the empirical means
\[
    \bar f_N := \frac1N\sum_{i=1}^N f(X_i),
    \qquad
    \bar f_N^{\,\widetilde X} := \frac1N\sum_{i=1}^N f(\widetilde X_i).
\]

Figure~\ref{fig:ar1-three-observables-consistency} illustrates the almost-sure convergence guaranteed by Theorem~\ref{thm:intro-consistency}.
For three representative observables --- $f(x)=x$, $f(x)=x^2$, and the step function
$f(x)=\mathbf 1_{\{x>c\}}$ with $c=-0.5$ --- we plot the running means
\(
\frac1n\sum_{i=1}^n f(X_i)
\)
and
\(
\frac1n\sum_{i=1}^n f(\widetilde X_i)
\)
along a single long AR(1) trajectory.  
In all cases the tangent--decoupled running mean converges to the correct limit, consistent with the theoretical behavior established in Theorem~\ref{thm:intro-consistency}.

\begin{figure}[t]
    \centering
    \includegraphics[width=0.32\textwidth]{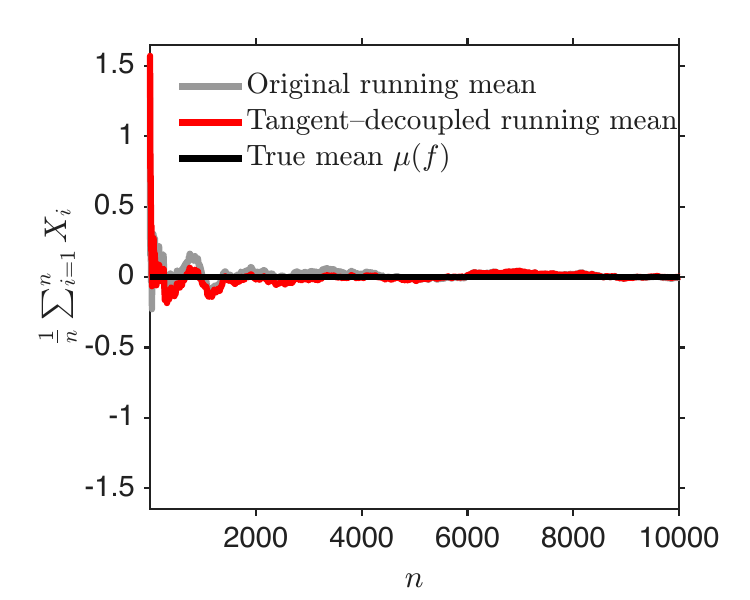}%
    \includegraphics[width=0.32\textwidth]{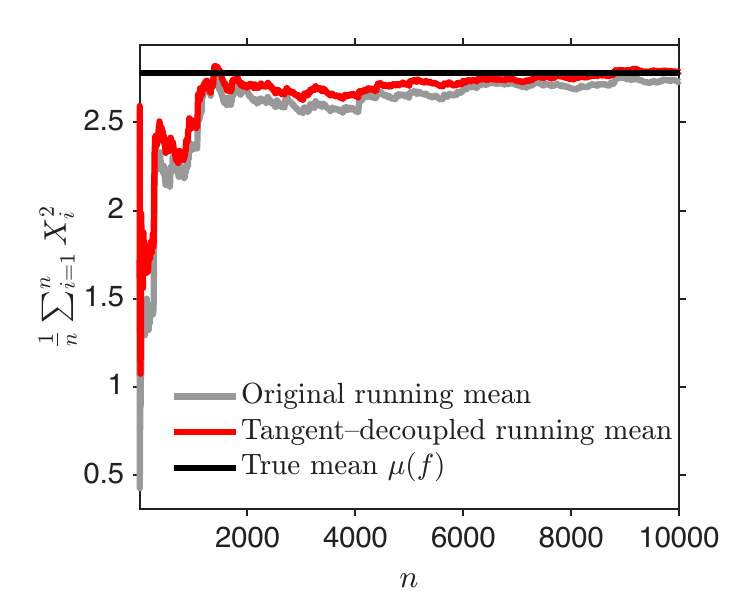}%
    \includegraphics[width=0.32\textwidth]{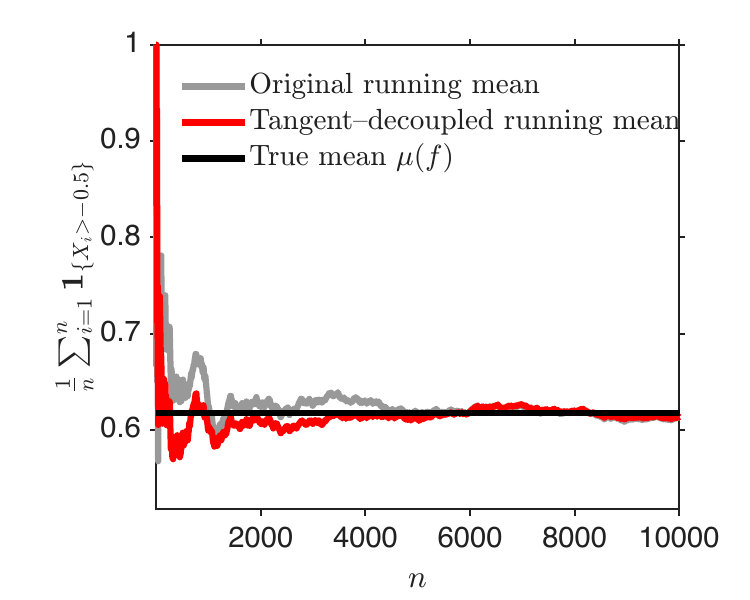}
    \caption{
    Consistency of the tangent–decoupled estimator in the AR(1) example.
    Each panel shows the running mean 
    $\frac1n\sum_{i=1}^n f(X_i)$ (solid gray) 
    and its tangent–decoupled analogue 
    $\frac1n\sum_{i=1}^n f(\widetilde X_i)$ (solid red),
    computed along a \emph{single long trajectory} of length $N=10^5$.
    The black line marks the true expectation $\mu(f)$.
    Left: $f(x)=x$.
    Middle: $f(x)=x^2$.
    Right: $f(x)=\mathbf{1}_{\{x>c\}}$ with $c=-0.5$.
    In all cases the tangent–decoupled running mean converges to the correct
    limit.
    }
    \label{fig:ar1-three-observables-consistency}
\end{figure}

Next we examine the finite-sample variance comparison stated in Theorem~\ref{thm:variance}.
Figure~\ref{fig:ar1-three-observables} reports empirical estimates of
\[
    N\,\Var(\bar f_N),
    \qquad
    2N\,\Var(\bar f_N^{\,\widetilde X}),
\]
for the same three observables.  
The inequality predicted by Theorem~\ref{thm:variance},
\[
    N\,\Var(\bar f_N)
    \;\le\;
    2N\,\Var(\bar f_N^{\,\widetilde X}),
\]
is borne out visually in all panels: the curve for $N\,\Var(\bar f_N)$ remains
below the curve for $2N\,\Var(\bar f_N^{\,\widetilde X})$, with the two
quantities typically of comparable magnitude.

\begin{figure}[t]
    \centering
    \includegraphics[width=0.32\textwidth]{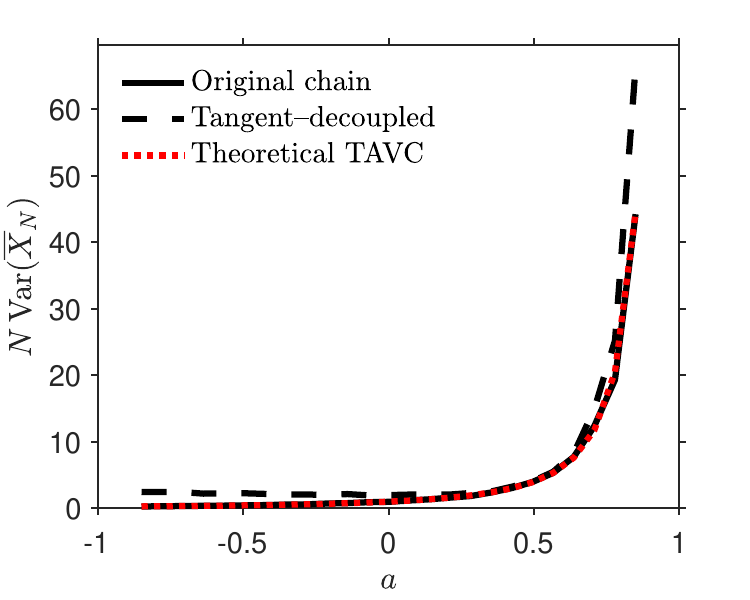}
    \includegraphics[width=0.32\textwidth]{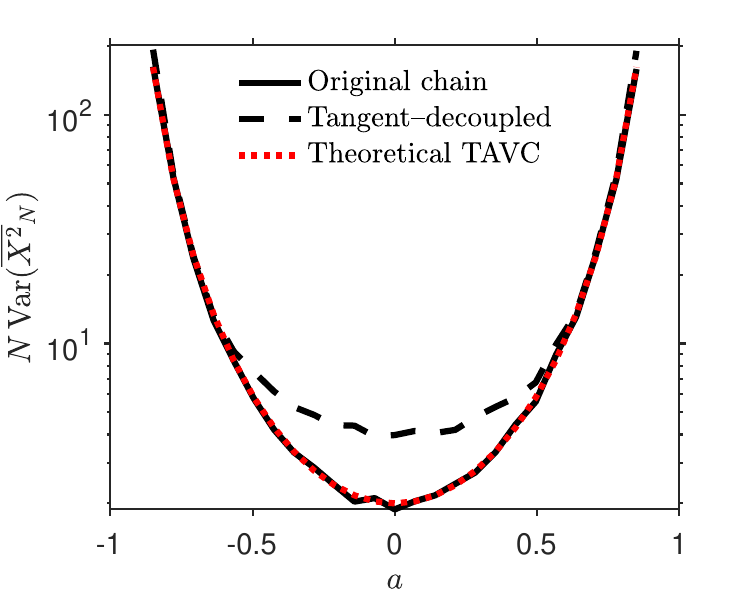}
    \includegraphics[width=0.32\textwidth]{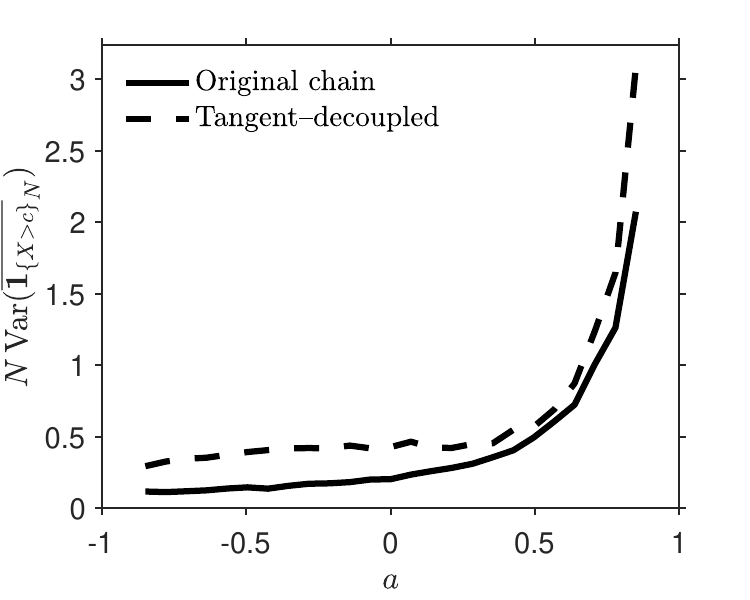}
\caption{
Comparison of the empirical time–average variance estimate 
$N\,\operatorname{Var}(\bar f_N)$ for the AR(1) chain (solid black),
its tangent–decoupled analogue 
$2N\,\operatorname{Var}(\bar f_N^{\,\widetilde X})$ (dashed black),
and the corresponding theoretical TAVC 
$\sigma_f^{\,2}$ (red dotted), for the observables 
$f(x)=x$ (left), $f(x)=x^2$ (middle, shown on a logarithmic scale), 
and the step function $f(x)=\mathbf{1}_{\{x>c\}}$ with $c=-0.5$
(right).  
In all cases the tangent–decoupled sequence exhibits weaker temporal 
dependence, and the empirical behavior clearly reflects Theorem~\ref{thm:variance}.}
\label{fig:ar1-three-observables}
\end{figure}


\section{Uncertainty Quantification in Markov Chain Monte Carlo}

\label{sec:mcmc}

 In our numerical experiments, the
tangent–decoupled sequence is generated in parallel with the main MCMC
chain.  At iteration $i$ the sampler is in state $X_i$ and produces the next
state via
\[
    X_{i+1} = \Phi(X_i, U_i),
\]
where $U_i$ denotes the auxiliary randomness used at that step (e.g.,
Bernoulli accept/reject decisions, momentum or direction refreshes, or other
algorithm-specific random choices).  The decoupled update is obtained by
applying the \emph{same} transition map to the same input state $X_i$ but
using an independent draw $U_i'$ of the auxiliary randomness:
\[
    \widetilde X_{i+1} = \Phi(X_i, U_i').
\]
The two sequences $(X_i)$ and $(\widetilde X_i)$ therefore evolve in
parallel, sharing the same backbone state at each iteration and differing
only through independent random inputs.  This construction requires no
storage beyond the current state and adds only one additional evaluation of
the transition map $\Phi$ per iteration, making both the computational and
memory overhead negligible.

Given the decoupled evaluations
\[
    f(\widetilde X_1),\dots,f(\widetilde X_N),
\]
we estimate the long--run variance $\widetilde{\sigma}_f^{\,2}$ of the
decoupled empirical mean
\[
    \bar f_N^{\,\widetilde X} = \frac{1}{N}\sum_{i=1}^N f(\widetilde X_i)
\]
using the \emph{initial positive sequence} (IPS) estimator of Geyer~\cite{geyer1992}, which underlies the effective sample size (ESS) estimators commonly used in MCMC; for an algorithmic introduction and implementation details, see the Stan Reference Manual~\cite{StanRef2.37}. 
Define centered values
\[
    e_i := f(\widetilde X_i) - \bar f_N^{\,\widetilde X},
\qquad i=1,\dots,N,
\]
and (unbiased) sample autocovariances
\[
    \widehat{\gamma}_e(k)
      := \frac{1}{N-k}\sum_{i=1}^{N-k} e_i\,e_{i+k},
    \qquad k=0,1,\dots
\]
Let $\widehat{\rho}_e(k) := \widehat{\gamma}_e(k)/\widehat{\gamma}_e(0)$
denote the empirical autocorrelations.  For $t\ge 0$ define the paired sums
\[
    \widehat P_t := \widehat{\rho}_e(2t) + \widehat{\rho}_e(2t+1).
\]
The IPS truncation index is the largest $\widehat m$ such that
$\widehat P_t>0$ for all $t=0,1,\dots,\widehat m$.  We then estimate the
integrated autocorrelation time by
\[
    \widehat\tau_{\mathrm{IPS}}
    :=
    -1 + 2\sum_{t=0}^{\widehat m} \widehat P_t,
\]
and the long--run variance by
\[
    \widetilde{\sigma}_{f,N}^2
    :=
    \widehat{\gamma}_e(0)\,\widehat\tau_{\mathrm{IPS}}.
\]
In all numerical experiments below, $\widetilde{\sigma}_{f,N}^2$ is the
variance estimator used to form confidence intervals for both the backbone
and tangent--decoupled empirical means.

The IPS estimator relies on a structural positivity property of the
autocorrelation sequence that is guaranteed for reversible Markov
chains.  Since the tangent--decoupled sequence is not itself a Markov
chain, it is not \emph{a priori} obvious that this positivity condition
continues to hold.  The following proposition shows that, provided the
backbone chain is reversible, the tangent-decoupled sequence inherits
the required positivity through an equivalent representation as a
reversible chain observed through a different test function.

\begin{proposition}[Positivity for the tangent--decoupled sequence]
\label{prop:decoupled-ips-positivity}
Assume the backbone chain $(X_i)_{i\ge 0}$ is stationary and reversible with
respect to $\pi$.  Fix $f\in L^2(\pi)$ and define the tangent--decoupled
sequence by $\widetilde X_{i+1}=\Phi(X_i,U_i')$, where $(U_i')$ are i.i.d.\
auxiliary random variables independent of $(X_i)$.  Let
$Y_i := f(\widetilde X_i) - \pi(f)$ and $\rho_Y(k)$ be the autocorrelation
function of $(Y_i)$.  Then the paired sums
\[
    P_t := \rho_Y(2t) + \rho_Y(2t+1), \qquad t\ge 0,
\]
are nonnegative:
\[
    P_t \ge 0 \quad \text{for all } t\ge 0.
\]
\end{proposition}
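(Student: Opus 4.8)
The plan is to reduce the claim to a statement about the (unnormalized) autocovariances $\gamma_Y(k):=\Cov(Y_i,Y_{i+k})$, compute these explicitly using the conditional independence of Lemma~\ref{lem:cond-indep}, and then recognize the resulting sequence as the backbone chain observed through the transformed test function $g:=Pf$, to which the spectral positivity underlying Geyer's estimator applies. Since $\gamma_Y(0)=\Var(Y_0)>0$, it suffices to show the unnormalized paired sums $\gamma_Y(2t)+\gamma_Y(2t+1)$ are nonnegative. Assume without loss of generality that $\pi(f)=0$, so that $Y_i=f(\widetilde X_i)$ and $g:=Pf$ is also centered, since $\pi(g)=\pi(Pf)=\pi(f)=0$ by stationarity. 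I would compute $\gamma_Y$ by conditioning on the backbone. For $k\ge1$ the auxiliary variables $U_{i-1}'$ and $U_{i+k-1}'$ driving $\widetilde X_i=\Phi(X_{i-1},U_{i-1}')$ and $\widetilde X_{i+k}=\Phi(X_{i+k-1},U_{i+k-1}')$ are independent, so conditioning on $(X_{i-1},X_{i+k-1})$ factorizes the expectation and yields
\[
\gamma_Y(k)=\E\bigl[g(X_{i-1})\,g(X_{i+k-1})\bigr]=\langle g,P^k g\rangle_{L^2(\pi)},\qquad k\ge1,
\]
by stationarity of the backbone. The lag-zero term is exceptional: because $\widetilde X_i\sim\pi$ (Lemma~\ref{lem:marginals}), one has $\gamma_Y(0)=\Var_\pi(f)=\langle f,f\rangle$, which is \emph{not} $\langle g,g\rangle=\langle f,P^2f\rangle$.

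Next I would invoke reversibility, which makes $P$ self-adjoint on $L^2(\pi)$ with spectrum contained in $[-1,1]$. By the spectral theorem there is a finite measure $\mu_g$ on $[-1,1]$ with $\langle g,P^k g\rangle=\int_{-1}^1\lambda^k\,d\mu_g(\lambda)$; note $g=Pf\in L^2(\pi)$ by contractivity of $P$, so $\mu_g$ is well defined. For $t\ge1$ both indices $2t,2t+1$ exceed $1$, hence
\[
\gamma_Y(2t)+\gamma_Y(2t+1)=\int_{-1}^{1}\lambda^{2t}(1+\lambda)\,d\mu_g(\lambda)\;\ge\;0,
\]
since $\lambda^{2t}\ge0$ and $1+\lambda\ge0$ on $[-1,1]$; this is exactly Geyer's paired-sum positivity for the reversible chain read through $g$. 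For $t=0$ I would write
\[
\gamma_Y(0)+\gamma_Y(1)=\bigl(\langle f,f\rangle-\langle g,g\rangle\bigr)+\bigl(\langle g,g\rangle+\langle g,Pg\rangle\bigr),
\]
where the second bracket is the $t=0$ Geyer sum for $g$ and is nonnegative by the same spectral identity, while the first bracket equals $\|f\|^2-\|Pf\|^2\ge0$ because $P$ is an $L^2(\pi)$-contraction. Thus $\gamma_Y(0)+\gamma_Y(1)\ge0$ as well, and dividing all sums by $\gamma_Y(0)>0$ gives $P_t\ge0$.

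The main obstacle is precisely this lag-zero mismatch: the tangent--decoupled sequence is \emph{not} literally the $g$-observed backbone chain, because its marginal variance is inflated from $\|Pf\|^2$ to $\|f\|^2$. The key realization is that this inflation is harmless for positivity—it adds only the nonnegative quantity $\|f\|^2-\|Pf\|^2$ to the single paired sum $P_0$, and leaves every $P_t$ with $t\ge1$ identical to that of a genuine reversible chain, where Geyer's spectral argument applies verbatim. Everything else is a routine spectral calculation; the one point requiring care is the conditional-independence factorization giving $\gamma_Y(k)=\langle g,P^kg\rangle$ for $k\ge1$, which is where the replacement of the auxiliary randomness by an independent copy does its work.
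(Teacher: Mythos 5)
Your proof is correct, and its skeleton is the same as the paper's: identify the autocovariances of $(Y_i)$ with those of the backbone chain observed through $g = Pf$ (centered), invoke the spectral theorem for the self-adjoint operator $P$ on $L^2(\pi)$ to write them as moments $\int_{-1}^{1}\lambda^k\,d\mu_g(\lambda)$ of a finite nonnegative measure, and conclude from the pointwise bound $\lambda^{2t}(1+\lambda)\ge 0$ on $[-1,1]$. The genuine difference is your handling of the lag-zero term, and here your version does more than add care: it repairs an actual flaw in the paper's argument. The paper asserts $\Cov(Y_0,Y_k)=\Cov\bigl(g(X_0),g(X_k)\bigr)$ for \emph{all} $k\ge 0$ and then ``normalizes by $\Var(g(X_0))$'' to present $\rho_Y(k)$ as the moment sequence of a probability measure. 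That identity fails at $k=0$: by the law of total variance,
\[
\gamma_Y(0)=\Var_\pi(f)
=\E\bigl[\Var\bigl(f(\widetilde X_1)\mid X_0\bigr)\bigr]+\Var_\pi(g)
\;\ge\;\Var_\pi(g),
\]
with strict inequality whenever the auxiliary randomness is nondegenerate, so $(\rho_Y(k))_{k\ge0}$ is \emph{not} the moment sequence the paper claims, and the paper's final display is unjustified precisely at $t=0$. Your proof separates the two regimes correctly: for $t\ge1$ both lags in $P_t$ are at least $1$, where the identification $\gamma_Y(k)=\langle g,P^k g\rangle_{L^2(\pi)}$ is exact and Geyer's spectral positivity applies verbatim; for $t=0$ your decomposition $\gamma_Y(0)+\gamma_Y(1)=\bigl(\|f\|^2-\|Pf\|^2\bigr)+\bigl(\langle g,g\rangle+\langle g,Pg\rangle\bigr)$ (with $\pi(f)=0$) shows the variance inflation of the decoupled sequence only \emph{adds} a nonnegative quantity, the first bracket being nonnegative by $L^2(\pi)$-contractivity of $P$ and the second by the spectral identity. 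In short: same spectral approach, but your explicit treatment of $P_0$ is what makes the proposition actually proved, and it is the step the paper's own write-up gets wrong.
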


\begin{proof}
Define
\[
    g(x) := \mathbb{E}\!\left[f(\Phi(x,U))\right] - \pi(f),
\]
where $U$ is distributed as the auxiliary randomness in one transition.  Note that $\int g(x)\,\pi(dx)=0$, so $g\in L^2_0(\pi)$.
By conditional independence of $(U_i')$ and stationarity of $(X_i)$,
\[
    \mathrm{Cov}(Y_0,Y_k)
    = \mathrm{Cov}\!\left(g(X_0),\,g(X_k)\right)
    \qquad (k\ge 0).
\]
Since $(X_i)$ is reversible with invariant $\pi$, its Markov operator $P$ is
self-adjoint on $L^2(\pi)$, and thus admits a spectral representation: there
exists a finite nonnegative measure $\mu_g$ on $[-1,1]$ such that
\[
    \mathrm{Cov}\!\left(g(X_0),g(X_k)\right)
    = \int_{-1}^1 \lambda^k\,d\mu_g(\lambda).
\]
Normalizing by $\mathrm{Var}(g(X_0))$ yields
$\rho_Y(k)=\int_{-1}^1 \lambda^k\,d\nu(\lambda)$ for a probability measure
$\nu$ on $[-1,1]$.  Therefore,
\[
    P_t
    = \int_{-1}^1 \bigl(\lambda^{2t}+\lambda^{2t+1}\bigr)\,d\nu(\lambda)
    = \int_{-1}^1 \lambda^{2t}(1+\lambda)\,d\nu(\lambda)
    \ge 0,
\]
since $\lambda^{2t}\ge 0$ and $1+\lambda\ge 0$ for all $\lambda\in[-1,1]$.
 This
proves the claim.
\end{proof}

Figure~\ref{fig:toeplitz-nuts-sigma2-vs-m} illustrates the behavior of the
estimated long--run variance $\widehat{\sigma}^2(m)$ as a function of the
truncation parameter $m$ for three representative test functions, using
NUTS as the underlying MCMC sampler targeting a Toeplitz Gaussian
distribution with covariance
$\Sigma_{ij} = \rho^{|i-j|}$.  All experiments were run in dimension $d=100$ with correlation parameter $\rho=0.9$, a fixed leapfrog step size $h=0.125$, and a maximum tree depth of $10$. 
Across all cases, the estimates obtained from both the backbone chain and
the tangent--decoupled sequence exhibit stable behavior over a wide range
of truncation values.  In particular, no systematic drift is observed as
$m$ increases, indicating that the effective truncation selected by
Geyer’s initial positive sequence (IPS) criterion occurs well before the
maximum admissible lag.  These results support the use of the IPS
estimator for uncertainty quantification in the tangent-decoupled
setting and are consistent with the theoretical positivity properties
established above.

Taken together, these results suggest that the tangent-decoupled sequence is most useful in regimes where the cost of generating the decoupled evaluations is lower than that of running the Markov chain for an equivalent additional number of iterations. Whether this tradeoff is favorable depends on implementation details that are specific to the sampler under consideration. In many MCMC methods, however, the dominant computational cost lies in evaluating the score function, and in such cases the additional overhead of generating a tangent–decoupled sequence can be substantially smaller than doubling the chain length. This is particularly plausible for gradient-based methods such as MALA, and it also raises the possibility that parallelization strategies exploiting SIMD architectures could be effective for locally adaptive samplers, since the tangent–decoupled updates share a common backbone state at each iteration.

\begin{figure}[t]
    \centering
    \includegraphics[width=0.32\textwidth]{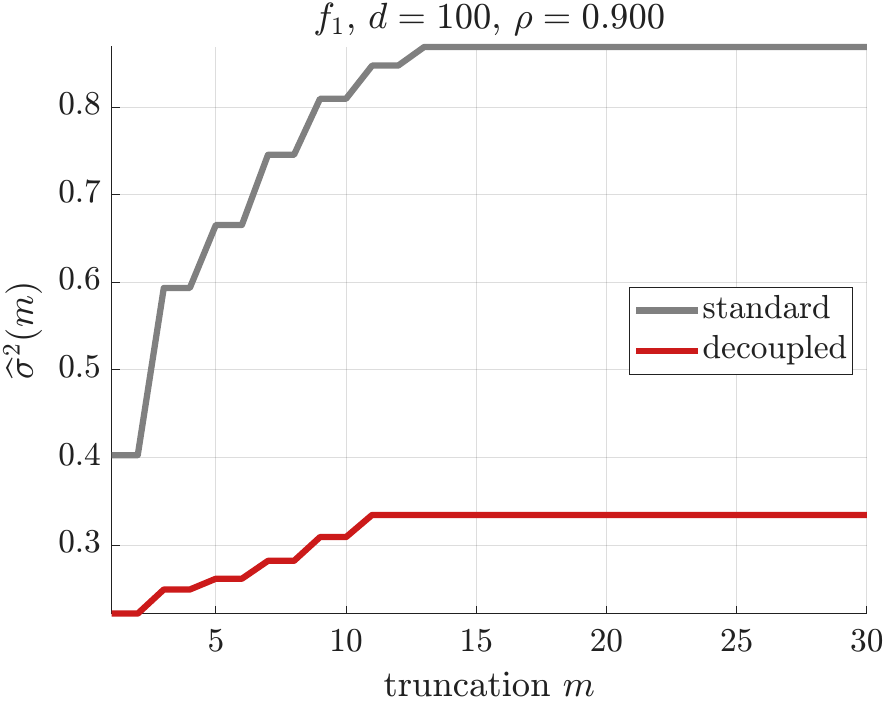}\hfill
    \includegraphics[width=0.32\textwidth]{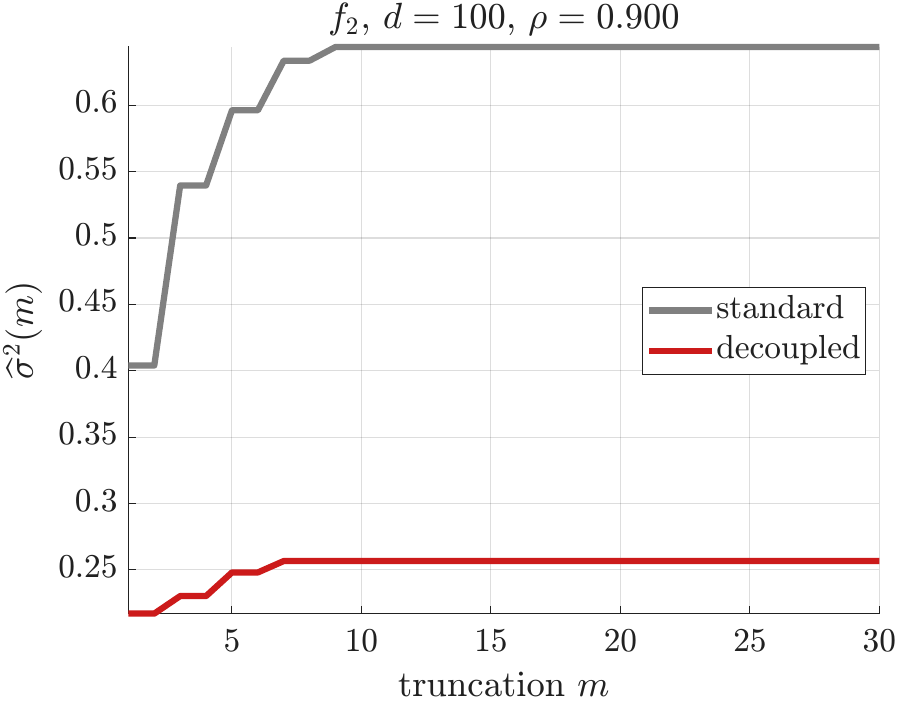}\hfill
    \includegraphics[width=0.32\textwidth]{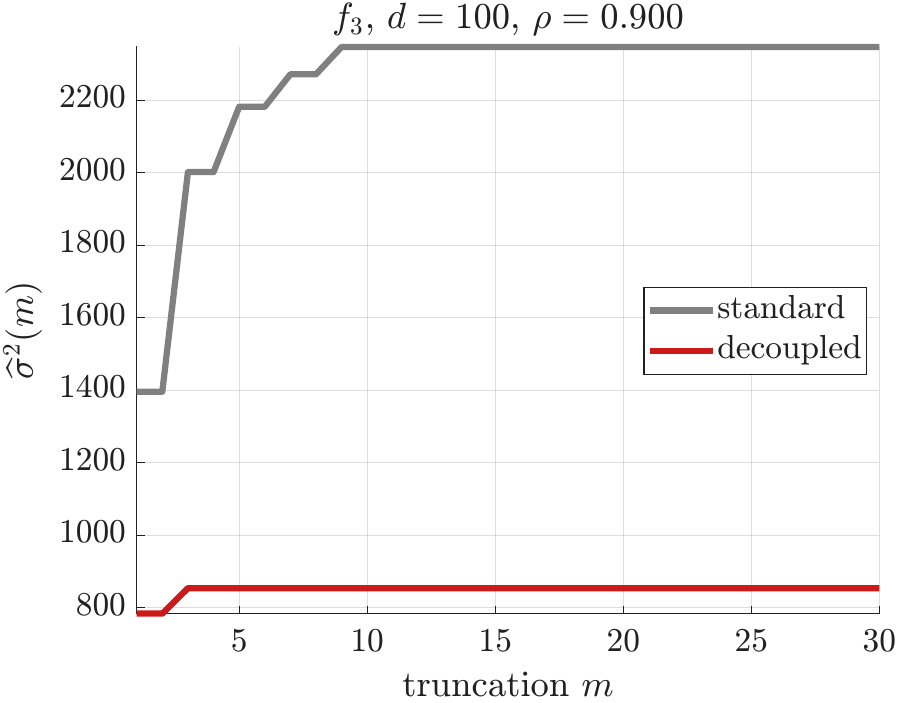}

    \caption{
    Estimated long--run variance $\widehat{\sigma}^2(m)$ as a function of the
    truncation parameter $m$ for NUTS applied to a $d=100$ Toeplitz Gaussian
    target with correlation parameter $\rho=0.9$.  Results are shown for the
    three test functions
    $f_1(\theta)=d^{-1}\sum_i \theta_i$,
    $f_2(\theta)=d^{-1}\sum_i \theta_i^2$, and
    $f_3(\theta)=\bigl(d^{-1/2}\sum_i \theta_i\bigr)^2$,
    comparing the standard backbone chain (gray) and the tangent--decoupled
    sequence (red).  Variance estimates are computed using Geyer’s initial
    positive sequence  estimator based on unbiased autocovariances.
    The empirical means of both the standard and decoupled
    sequences are close to their known true values, with the decoupled estimator exhibiting smaller relative error across all three test functions.
    }
    \label{fig:toeplitz-nuts-sigma2-vs-m}
\end{figure}


\section{Conclusion}

This paper develops a notion of tangent decoupling for Markov chains generated
via i.i.d.\ auxiliary randomness, a representation that encompasses essentially
all MCMC methods used in practice, from Gibbs samplers to Hamiltonian Monte Carlo
and NUTS. By separating the deterministic backbone of the chain from the
stochastic auxiliary inputs, we introduce a framework in which ergodic average
estimation and Monte Carlo uncertainty quantification can be carried out via a
companion process with simpler temporal structure.  This viewpoint leads to two main theoretical contributions and a practical
method for Monte Carlo error quantification.

First, we proved that the tangent–decoupled sequence
$(\widetilde X_i)$ --- obtained by running the same update map $\Phi$ along the
realized backbone $(X_i)$ but with fresh, independent auxiliary variables --- has
the correct stationary marginals and yields a \emph{consistent estimator} of
$\mu(f)$ for every integrable observable.  
Although $(\widetilde X_i)$ is neither a Markov chain nor driven by the kernel
$P$, Theorem~\ref{thm:decoupled-consistency} shows that
\[
    \frac1n \sum_{i=1}^n f(\widetilde X_i)
    \;\xrightarrow{\mathrm{a.s.}}\; \mu(f),
\]
under mild and explicit integrability assumptions.  
Thus tangent–decoupling provides a second, structurally simpler Monte Carlo
estimator with the same asymptotic mean as the original chain.

Second, leveraging the conditional independence structure of
$(\widetilde X_i)$ given the backbone, we established a \emph{uniform,
non-asymptotic variance inequality} for additive functionals of Markov chains.  
For every $f\in L^2(\mu)$ and every $N\ge1$,
\[
    \Var\!\left(\sum_{i=0}^{N-1} f(X_i)\right)
    \;\le\;
    2\,\Var\!\left(\sum_{i=0}^{N-1} f(\widetilde X_i)\right),
\]
with no reversibility, spectral conditions, or mixing assumptions required.
As an immediate corollary, the time–average variance constants satisfy
\[
    \sigma_f^2 \;\le\; 2\,\widetilde\sigma_f^{\,2}.
\]
This provides a provably conservative and easily computable upper bound for the
asymptotic variance of the original chain, enabling practical uncertainty
quantification across a wide range of MCMC algorithms.

In addition, we show that standard variance estimation procedures remain valid
for the tangent–decoupled sequence. In the reversible stationary setting, the
autocorrelation structure of the decoupled process satisfies a paired
positivity property, ensuring that the usual initial positive sequence (IPS)
and related estimators apply without modification. This guarantees that
practical variance estimators commonly used for Markov chains can be safely
applied to the tangent–decoupled sequence, strengthening its role as a
computationally tractable proxy for uncertainty quantification.

Our numerical experiments confirm the theoretical results:  
(i) the tangent–decoupled estimator converges rapidly to the correct mean, and  
(ii) the variance comparison holds sharply and stably across dimensions,
observables, and samplers, including  NUTS.

More broadly, tangent–decoupling offers a transparent way to connect a general
auxiliary-variable Markov chain to a companion process with 
weaker temporal dependence.  
The companion chain is simple enough to analyze directly while remaining
tightly linked to the original dynamics, suggesting that the approach may prove
useful beyond consistent estimators and variance bounds; for example, in deriving concentration
inequalities, studying perturbation stability, or analyzing convergence.  
We expect tangent–decoupling to have further applications in the theory and
design of modern MCMC algorithms.

\section*{Acknowledgements}
Victor de la Peña gratefully acknowledges financial support from Google DeepMind (project number GT009019).

\bibliographystyle{plain}

\begin{thebibliography}{99}


\bibitem{asmussen2007}
S.~Asmussen and P.~W. Glynn.
\newblock {\em Stochastic Simulation: Algorithms and Analysis}.
\newblock Springer, 2007.

\bibitem{Atchade2011}
Y.~F. Atchadé.
\newblock Kernel estimators of asymptotic variance for adaptive Markov chain Monte Carlo.
\newblock {\em Annals of Statistics}, 39(2):990--1011, 2011.

\bibitem{Atchade2016}
Y.~F. Atchadé.
\newblock Markov Chain Monte Carlo confidence intervals.
\newblock {\em Bernoulli}, 22(3):1808--1838, 2016.

\bibitem{Bartlett1946}
M.~S.~Bartlett.
\newblock On the theoretical specification of sampling properties of
autocorrelated time series.
\newblock {\em Journal of the Royal Statistical Society}, 8(1):27--41, 1946.

\bibitem{BouRabeeCarpenterKleppeLiu2025}
N.~Bou-Rabee, B.~Carpenter, T.~S.~Kleppe, and S.~Liu.
\newblock The Within-Orbit Adaptive Leapfrog No-U-Turn Sampler.
\newblock \emph{arXiv:2506.18746 [stat.CO]}, 2025.

\bibitem{CarpenterEtAl2017}
B.~Carpenter, A.~Gelman, M.~D.~Hoffman, D.~Lee, B.~Goodrich,
M.~Betancourt, M.~Brubaker, J.~Guo, P.~Li, and A.~Riddell.
\newblock Stan: A probabilistic programming language.
\newblock \emph{Journal of Statistical Software}, 76(1):1--32, 2017.

\bibitem{CasellaGeorge1992}
G.~Casella and E.~I.~George.
\newblock Explaining the {G}ibbs sampler.
\newblock \emph{The American Statistician} \textbf{46} (1992), 167--174.



\bibitem{delapena-1994}
V.~H. de~la Pe{\~n}a.
\newblock A bound on the moment generating function of sums of dependent
  variables with application to sampling without replacement.
\newblock {\em Ann. Inst. H. Poincar{\'e} Probab. Statist.}, 30(2):435--443,
  1994.

\bibitem{delapena-yao-alemayehu}
V.~H. de~la Pe{\~n}a, H.~Yao, and D.~Alemayehu.
\newblock Sharp decoupling inequalities for the variances and second moments of
  sums of dependent random variables.
\newblock Submitted, 2025.

\bibitem{delapena-gine}
V.~H. de~la Pe{\~n}a and E.~Gin{\'e}.
\newblock {\em Decoupling: From Dependence to Independence}.
\newblock Springer, 1999.

\bibitem{DoucMoulinesPriouretSoulier}
R.~Douc, E.~Moulines, P.~Priouret, and P.~Soulier.
\newblock {\em Markov Chains}.
\newblock Springer Series in Operations Research and Financial Engineering.
  Springer, Cham, 2018.

\bibitem{DuaneKennedyPendletonRoweth1987}
S.~Duane, A.~D.~Kennedy, B.~J.~Pendleton, and D.~Roweth.
\newblock Hybrid Monte Carlo.
\newblock \emph{Physics Letters B} \textbf{195} (1987), 216--222.

\bibitem{FlegalJones2010}
J.~M. Flegal and G.~L. Jones.
\newblock Batch means and spectral variance estimators in Markov chain Monte
  Carlo.
\newblock {\em Ann. Statist.}, 38:1034--1070, 2010.

\bibitem{FlegalJones2011}
J.~M. Flegal and G.~L. Jones.
\newblock Implementing MCMC: Estimating with confidence.
\newblock In {\em Handbook of Markov Chain Monte Carlo}, 
S.~Brooks, A.~Gelman, G.~L. Jones, and X.-L.~Meng, editors, 
pages 175--198. Chapman \& Hall/CRC, 2011.

\bibitem{gelman1992}
A.~Gelman and D.~B. Rubin.
\newblock Inference from iterative simulation using multiple sequences.
\newblock {\em Statist. Sci.}, 7:457--472, 1992.

\bibitem{geyer1992}
C.~J. Geyer.
\newblock Practical Markov chain Monte Carlo.
\newblock {\em Statist. Sci.}, 7:473--483, 1992.

\bibitem{GrenanderSzego1984}
U.~Grenander and G.~Szeg\H{o}.
\newblock \emph{Toeplitz Forms and Their Applications}.
\newblock 2nd edition, Chelsea Publishing, New York, 1984, pp.~66--71.

\bibitem{HoffmanGelman2014}
M.~D.~Hoffman and A.~Gelman.
\newblock The {N}o-{U}-Turn sampler: adaptively setting path lengths in
{H}amiltonian {M}onte {C}arlo.
\newblock \emph{Journal of Machine Learning Research} \textbf{15} (2014), 1593--1623.

\bibitem{Liu}
J.~S. Liu.
\newblock {\em Monte Carlo Strategies in Scientific Computing}.
\newblock Springer Science \& Business Media, 2008.

\bibitem{Madras}
N.~N. Madras.
\newblock {\em Lectures on Monte Carlo Methods}.
\newblock Amer. Math. Soc., 2002.

\bibitem{MeynTweedie1993}
S.~P. Meyn and R.~L. Tweedie.
\newblock {\em Markov Chains and Stochastic Stability}.
\newblock Springer, 1993.

\bibitem{Neal2011HMC}
R.~M.~Neal.
\newblock {MCMC} using {H}amiltonian dynamics.
\newblock In \emph{Handbook of Markov Chain Monte Carlo}, S.~Brooks, A.~Gelman,
G.~L.~Jones, and X.-L.~Meng (eds.), pp.~113--162. Chapman \& Hall/CRC, 2011.

\bibitem{NeweyWest1987}
W.~K.~Newey and K.~D.~West.
\newblock A simple, positive semi-definite, heteroskedasticity and
autocorrelation consistent covariance matrix.
\newblock {\em Econometrica}, 55(3):703--708, 1987.

\bibitem{Owen2013}
A.~B.~Owen.
\newblock \emph{Monte Carlo theory, methods and examples}.
\newblock 2013.
\newblock Available at \url{https://artowen.su.domains/mc/}.

\bibitem{RobertsGelmanGilks1997}
G.~O.~Roberts, A.~Gelman, and W.~R.~Gilks.
\newblock Weak convergence and optimal scaling of random walk {M}etropolis algorithms.
\newblock \emph{Annals of Applied Probability} \textbf{7} (1997), no.~1, 110--120.

\bibitem{RobertsTweedie1996RWM}
G.~O.~Roberts and R.~L.~Tweedie.
\newblock Geometric convergence and central limit theorems for multidimensional random walk {M}etropolis algorithms.
\newblock \emph{Biometrika} \textbf{83} (1996), 95--110.

\bibitem{RobertsTweedie1996}
G.~O.~Roberts and R.~L.~Tweedie.
\newblock Exponential convergence of {L}angevin distributions and their discrete approximations.
\newblock \emph{Bernoulli} \textbf{2} (1996), no.~4, 341--363.

\bibitem{RobertCassella}
C.~P. Robert and G.~Casella.
\newblock {\em Monte Carlo Statistical Methods}.
\newblock Springer Texts in Statistics. Springer-Verlag, New York, 2nd edition,
  2004.

\bibitem{Rosenthal2017}
J.~S.~Rosenthal.
\newblock Simple confidence intervals for MCMC without CLTs.
\newblock \emph{Electronic Journal of Statistics}, 11:211--214, 2017.


\bibitem{StanRef2.37}
Stan Development Team.
\newblock {\em Stan Reference Manual, Version 2.37}.
\newblock Stan Development Team, 2025.
\newblock \url{https://mc-stan.org/docs/2_37/reference-manual-2_37.pdf}.

\bibitem{welford1962}
B.~P. Welford.
\newblock Note on a method for calculating corrected sums of squares and
  products.
\newblock {\em Technometrics}, 4:419--420, 1962.
\end{thebibliography}

\end{document}